\theoremstyle{commentStyle}
\theoremstyle{responseStyle}
\def\be{\begin{equation}}
\def\ee{\end{equation}}
\def\ba{\begin{array}}
\def\ea{\end{array}}
\def\smod{\color{black}}
\def\emod{\color{black}}
\def\emodc{\color{black}}
\newtheorem{assum}{Assumption}
\newtheorem{definition}{Definition}
\newtheorem{thm}{Theorem}
\newtheorem{lem}{Lemma}
\newtheorem{proposition}{Proposition}
\def\smod{\color{black}}
\def\emod{\color{black}}
\def\qedp{\hspace*{\fill}~{\tiny $\blacksquare$}}
\newtheorem{itremark}{Remark}
\newenvironment{remark}{\begin{itremark}\rm}{\end{itremark}}
\newenvironment{rem}{\begin{itremark}\rm}{\end{itremark}}
\newcounter{mytempeqncnt}
\DeclareMathAlphabet{\pazocal}{OMS}{zplm}{m}{n}
\DeclareMathOperator{\im}{im}
\DeclareMathOperator{\col}{col}
\newcommand{\calS}{\ensuremath{\mathcal{S}}}
\newcommand{\calH}{\ensuremath{\mathcal{H}}}
\newcommand{\calU}{\ensuremath{\mathcal{U}}}
\newcommand{\R}{\ensuremath{\mathbb R}}
\newcommand{\fpart}[2]{\displaystyle \frac{\partial #1}{\partial #2}}
\newcommand{\barfpart}[2] {\displaystyle \left.\frac{\partial #1}{\partial #2}\right|_-}
\newcommand{\barfpartT}[2] {\displaystyle \left.\frac{\partial #1}{\partial #2}\right|_-^T}
\newcommand{\new}[1]{\textcolor{black}{#1}}
\newcommand{\alert}[1]{\textcolor{black}{#1}}
\newcommand{\New}[1]{\textcolor{black}{#1}}
\newcommand{\nima}[1]{\textcolor{black}{#1}}
\newcommand{\claudio}[1]{\textcolor{black}{#1}}
\newcommand{\cter}[1]{\textcolor{black}{#1}}
\title{
{\color{black}Bregman storage functions} for microgrid control 
}
\author{C. De Persis and N.~Monshizadeh\thanks{C.~De Persis and N.~Monshizadeh are with 
ENTEG and the J.C.~Willems Center for Systems and Control, University of Groningen, the Netherlands. E-mail: \{c.de.persis, n.monshizadeh\}@rug.nl.}}
\begin{document}
\maketitle
\begin{abstract}
In this paper we contribute a theoretical framework that sheds a new light on the problem of microgrid analysis and control. The starting point is an energy function comprising the {\color{black}``}kinetic{\color{black}"} energy associated with the elements that emulate the rotating machinery and terms taking into account the reactive power stored in the lines and dissipated on shunt elements. We then shape this energy function with the addition of an adjustable voltage-dependent term, and construct {\color{black}so-called Bregman} storage functions satisfying suitable dissipation inequalities. Our choice of the voltage-dependent term depends on the voltage dynamics under investigation. Several microgrids dynamics that have similarities or coincide with dynamics already considered in the literature are captured in our incremental energy analysis framework.  The twist with respect to existing results is that our incremental storage functions allow for a large signal analysis of the coupled microgrid obviating the need for simplifying linearization techniques and for the restrictive decoupling assumption in which the frequency dynamics is fully separated from the voltage one. A complete Lyapunov stability analysis of the various systems is carried out along with a discussion on their active and reactive power sharing properties.
\end{abstract}

\section{Introduction}
Microgrids have been envisioned as one of the leading technologies to increase the penetration of renewable energies in the power market. A thorough discussion of the technological, physical and control-theoretic  aspects of microgrids is provided in many interesting comprehensive works, including \smod \cite{zhong, zhong2,green,lewis,schiffer.thesis}. \emod

Power electronics allows inverter in the microgrids to emulate desired dynamic behavior. This is an essential feature since when the microgrid is in grid forming mode, inverters have to inject active and reactive power in order to supply the loads in a shared manner and maintain the desired frequency and voltage values at the nodes. Hence, much work has focused on the design of dynamics for the inverters that achieve these desired properties and this effort has involved both practitioners and theorists,  all providing a myriad of solutions, whose performance has been tested mainly numerically and experimentally.

The main obstacle however remains a systematic design of the microgrid controllers that achieve the desired properties in terms of frequency and voltage regulation with power sharing. The difficulty lies in the complex structure of these systems, comprising dynamical models of inverters and loads that are physically interconnected via exchange of active and reactive power. In quasi steady state working conditions, these quantities are sinusoidal terms depending on the voltage phasor relative phases. As a result, mathematical models of microgrids reduce to high-order oscillators interconnected via sinusoidal coupling, where the coupling weights depend on the voltage magnitudes obeying additional  dynamics. The challenges with these models lie in the presence of highly nonlinear terms and the strict coupling between active and reactive power flow equations.

To deal with the aforementioned complexity of these dynamical models common remedies are to decouple frequency and voltage dynamics, and to linearize the power flow equations. While the former enables a separate analysis of the two dynamics (\cite{schiffer.tcst}), the latter permits the use of a small signal argument to infer stability results; see e.g. \cite{simpson.quadratic, simpson.industrial}.

Results that deal with the fully coupled system are also available 
\cite{schiffer.aut,wang.lemmon, muenz}. In this case, the results mainly concern network-reduced models with primary control, namely stability rather than stabilization of the equilibrium solution. Furthermore,  lossy transmission lines can also be studied \cite{dorfler.bullo.sicon, wang.lemmon, bouattour, wang.lemmon, muenz}, and also \cite{chopra}.  

\cter{{\it Main contribution}.} In spite of these many advances, what is still missing is a comprehensive approach to deal with the analysis and control design for microgrids. In this paper we provide a contribution in this direction. The starting point is the energy function associated with the system, a combination of kinetic and potential energy. Relying on an extended notion of incremental dissipativity, {\color{black} a number of so-called Bregman storage functions} whose critical points have desired features are constructed. \smod The construction is inspired by works in the control of networks in the presence of disturbances, which makes use of  internal model controllers (\cite{mathias.claudio, Nima-Claudio-15}) and  incremental passivity (\cite{stan.sepulchre.tac07}).  The {\color{black} storage} functions that we design {encompass  several} {\color{black} network-reduced versions of} microgrid dynamics that have appeared in the literature, including the conventional droop controller \cite{zhong, schiffer.aut}, the quadratic droop controller \cite{simpson.quadratic}, and the  reactive power consensus dynamics \cite{schiffer.tcst}. Our analysis, however, suggests suitable modifications such as an  exponential  scaling of the averaging  reactive power dynamics of \cite{schiffer.tcst},  and inspires new controllers, such as the so-called reactive current controller  \cter{(we refer to \cite{Brabandere} for a related controller)}.  The approach we propose  has two additional distinguishing features: we do not need to assume decoupled dynamics and we perform a large signal analysis. 

 Our contribution also expands the knowledge on the use of energy functions in the context of microgrids. Although historically energy functions  have played a crucial role to deal with accurate models of power systems (\cite{tsolas,chu.chiang.cssp1999,chiang.et.al.proc.ieee95}), 
our approach based on the incremental dissipativity notion sheds a new light into the construction of these energy functions, allows us to cover a wider range of microgrid dynamics, and paves the way for the design of dynamic controllers, following the combination of passivity techniques and internal model principles as in \cite{mathias.claudio}. \smod We refer the reader to 
e.g.~\cite{ortega.et.al.tac05,dib.et.al.tac09} for seminal work on passivity-based control of power networks. \emod

In this paper we focus on network reduced models of microgrids (\cite{schiffer.aut,wang.lemmon,muenz,simpson.industrial}). These models are typically criticized for not providing an explicit characterization of the loads (\cite{simpson.quadratic}). Focusing on network reduced models allows us to reduce the technical complexity of the arguments and to provide an elegant analysis. However, one of the advantages of the use of the energy functions is that they remain effective also with network preserved models (\cite{tsolas}). In fact, a preliminary investigation  not reported in this manuscript for the sake of brevity shows that the presented results extend to the case of network preserved models. A full investigation of this case will be reported elsewhere. 

The outline of the paper is as follows. In Section \ref{sec.II},  details on the model under consideration are provided. In Section  \ref{sec.III} the design of \claudio{Bregman storage} functions is carried out and incremental dissipativity of various models of microgrids associated with different voltage dynamics is shown. A few technical conditions on these energy functions are discussed in Section  \ref{sec.IV}, and a {\color{black}distributed} test to check them is also provided. Based on the results of these sections, attractivity of the prescribed synchronous solution and voltage stability is presented in Section  \ref{sec.attractivity}, along with a discussion on power sharing properties of the proposed controllers \claudio{(Subsection \ref{subset.power.sharing}).} Power sharing in the presence of homogeneous  lossy  transmission lines  
\claudio{is studied in Subsection \ref{subs.lossy}}. \claudio{The paper ends with concluding remarks in Section \ref{sec.VI}.} 


\section{Microgrid model and a synchronous solution} \label{sec.II}
We consider a network-reduced model of a microgrid {\color{black} operating in islanded mode, that is disconnected from the main grid.
This model is given by} 
\be\label{micro.grid}
\ba{rcl}
\dot \theta &=& \omega\\
T_P\dot \omega &=& -(\omega-\omega^*) - K_P(P-P^*) +u_P\\ 
T_Q\dot V &=& f(V, Q, u_Q)
\ea\ee
where $\theta\in \mathbb{T}^n$ is the vector of voltage angles, $\omega\in \mathbb{R}^n$ is the frequency, $P\in \mathbb{R}^n$ is the active power vector, $Q\in \mathbb{R}^n$ is the reactive power vector, and $V\in \mathbb{R}^n_{>0}$ is the vector of voltage magnitudes. \cter{The integer $n$ equals the number of nodes in   the microgrid and $\mathcal{I}:=\{1,2,\ldots, n\}$ is the set of indices associated with the nodes.} The matrices $T_P$, $T_V$, and $K_P$ are diagonal and positive definite.
The vectors $\omega^\ast$ and $P^\ast$ denote the frequency and active power setpoints, respectively. \new{The vector   $P^\ast$ \cter{may} also models active power loads at  the buses {\color{black}(see Remark \ref{r:loads})}}.  The vector $u_Q$ is an additional input. The function $f$ accounts for the voltage dynamics/controller and is decided later. 

The model \eqref{micro.grid} with an appropriate selection of $f$ {\color{black}describes} various models of network-reduced microgrids in the literature, including conventional droop controllers, quadratic droop controllers, and consensus based reactive power control schemes (\cite{zhong,simpson,schiffer.aut,simpson.quadratic,schiffer.tcst}). 
{\color{black}However,  while \cite{simpson,simpson.quadratic,schiffer.modelling} consider network-preserved models of microgrids, in this paper network-reduced models are considered. }
\smod We refer the reader to \cite{schiffer.modelling} for a compelling derivation of microgrid models from first principles. 
\emod

Our goal here is to provide a unifying framework for analysis of the microgrid model \eqref{micro.grid} for different types of voltage controllers, and  study frequency regulation, voltage stability, and active as well as reactive power sharing. A key point of our approach is that it does not rely on simplifying and often restrictive premises such as the decoupling assumption and linear approximations.  


\smallskip

\noindent {\it Active and reactive power.} 
The active power $P_i$ is given by
\be\label{active}
P_i = \sum_{j\in \mathcal{N}_i} B_{ij} V_iV_j \sin \theta_{ij},\quad \theta_{ij}:= \theta_i-\theta_j
\ee
and the reactive power by
\be\label{reactive}
Q_i = B_{ii} V_i^2-\sum_{j\in \mathcal{N}_i} B_{ij} V_iV_j \cos \theta_{ij},\quad \theta_{ij}:= \theta_i-\theta_j.
\ee
Note that here 
$B_{ii}= \hat B_{ii}+\sum_{j\in \mathcal{N}_i} B_{ij}$, where $B_{ij}=B_{ji}>0$ is the negative of the susceptance at edge $\{i,j\}$ and $\hat B_{ii}\ge 0$ is the negative of the  shunt susceptance at node $i$.\footnote{{\color{black} See Remark \ref{r:loads} for a discussion on the physical meaning of these shunt susceptances.}}
Hence, 
$B_{ii}\ge \sum_{j\in \mathcal{N}_i} B_{ij}$ for all $i$. 

It is useful to have compact representations of both active and reactive power. 
Setting 
$\Gamma(V)={\rm diag}(\gamma_1(V),\ldots, \gamma_{\color{black}m}(V))$, $\gamma_k(V)= V_i V_j B_{ij}$, with $k\in  E:=  \{1,2,\ldots,{\color{black}m}\}$  being the index corresponding to the edge $\{i,j\}$ (in short, $k\sim \{i,j\}$), 
the vector of the active power at all the nodes writes as
\[
P= D\Gamma(V) \boldsymbol{\sin} (D^T \theta). 
\]
where $D=[d_{ik}]$ is the incidence matrix of the graph describing the interconnection structure of the network, {and the \alert{vector} $\boldsymbol{\sin}(\cdot)$ is defined element-wise.} Let us now introduce the vector $A_0= {\rm col} (B_{11},\ldots, B_{nn})$. Since $|d_{ik}| \cos (d_{ik}\theta_i +d_{jk}\theta_j)= \cos(\theta_i-\theta_j)$, {\color{black}for $k\sim\{i,j\}$,} the vector of reactive power at the nodes takes the form
\[
Q= [V][A_0] V - |D| \Gamma(V) \boldsymbol{\cos}(D^T \theta), 
\]
where $|D|$ is obtained by replacing each element $d_{ij}$ of $D$ with $|d_{ij}|$.\footnote{In fact, denoted by $\eta$ the vector $D^T \theta$, the entry $ij$ of the matrix $|D| \Gamma(V) \boldsymbol{\cos}(D^T \theta)$ writes as 
\[\ba{rcl}
[|D| \Gamma(V) \boldsymbol{\cos}(D^T \theta)]_{ij}&=&\displaystyle \sum_{k=1}^{\cter{m}} |d_{ik}| \gamma_k(V) \cos(\eta_k)\\[2mm]
&=&\displaystyle  \sum_{k\sim\{i,j\}} |d_{ik}| V_iV_j B_{ij} \cos(d_{ik}\theta_i +d_{jk}\theta_j)\\[2mm]
&=& \displaystyle \sum_{j\in \mathcal{N}_i} V_iV_j B_{ij} \cos(\theta_i -\theta_j)
\ea\]
}
Moreover, here and throughout the paper, the notation $[v]$ represents the diagonal matrix associated with vector $v$. 

Another compact representation is useful as well. To this end, introduce the symmetric matrix 
	\[
	\ba{l}
	\mathcal{A} (\boldsymbol{\cos}(D^T \theta)) = \\
	\begin{bmatrix}
	B_{11} & -B_{12} \cos \theta_{12} & \ldots & -B_{1n} \cos \theta_{1n} \\
	-B_{21} \cos \theta_{21} & B_{22} & \ldots & -B_{2n} \cos \theta_{2n} \\
	\vdots & \vdots & \ddots & \vdots\\
	-B_{n1} \cos \theta_{n1}   & -B_{n2} \cos \theta_{n2} & \ldots & B_{nn}\\
	\end{bmatrix}.
	\ea
	\]
The vector $Q$ becomes 
\be\label{Q.A}\ba{rcl}
Q 
&= & [V] \mathcal{A}(\boldsymbol{\cos}(D^T \theta)) V,
\ea\ee
where again we are exploiting the identity  $\cos (d_{ik}\theta_i +d_{jk}\theta_j)= \cos
\cter{\theta_{ij}}$.

As a consequence of the condition $B_{ii}\ge \sum_{j\in \mathcal{N}_i} B_{ij}$ for all $i$, 
provided that 
at least one $\hat B_{ii}$ is non-zero (which is the standing assumption throughout the paper),
the symmetric matrix $\mathcal{A}(\boldsymbol{\cos}(D^T \theta))$ has all strictly positive eigenvalues and hence is a positive definite matrix.   
Note that the matrix 
$\mathcal{A}$ can be interpreted as  a loopy Laplacian matrix of the graph.  
\smallskip

{\color{black} 
Before proceeding further,  we remark on the adopted model.
\begin{rem} \label{r:lines}{\bf (Lossless and lossy network)}
The power lines are assumed to be {\em lossless} in \eqref{micro.grid}. This is valid if the lines are dominantly inductive, a condition which can be fulfilled by tuning output impedances of the inverters; see e.g.~\cite{li-kao}.  As will be observed in Subsection \ref{subs.lossy}, the lossless assumption can be relaxed by considering {\em lossy}, yet {\em homogenous}, power lines. 
\end{rem}

\begin{rem} \label{r:loads}{\bf (Loads)}
There are a few load scenarios that can be incorporated in the microgrid model \eqref{micro.grid}. The first scenario accounts for purely inductive loads, see \cite[Remark 1]{schiffer.tcst}. Whether these loads are collocated with inverters or appear as individual nodes, they will lead to nonzero shunt admittances at the nodes of the reduced network, where the latter follows from Kron reduction. The resulting shunt admittances constitute the nonzero shunt susceptance $\hat B_{ii}$ introduced after \eqref{reactive}, see also \cite[Section V.A]{simpson.industrial} and \cite{schiffer.tcst}. 
As for the active power loads, following \cite[Remark 3.2]{schiffer.aut}, one can consider negative active power setpoints $P_i^*$ for the inverter $i$, which corresponds to the inverter $i$ connecting a storage device to the grid, in which case the device is acting as a frequency and voltage dependent load (see also \cite[Section 2.4]{muenz}).  Another possibility is to consider constant active power loads collocated with the inverters by embedding the constant active power consumption in the term $P_i^*$. We remark that the controllers studied in the paper do not rely on the knowledge of $P_i^*$, and are therefore fully compatible with the case in which  $P_i^*$ are not completely known due to uncertainties in the loads. 
Finally, the extension of our analysis to the lossy lines in Subsection \ref{subs.lossy} allows us to accommodate loads as homogenous $RL$ circuits. As an interesting special case of this, the forthcoming dissipativity/stability analysis carries over to the case of microgrids with (purely) resistive lines and loads. 
More details on this case are provided in Subsection \ref{subs.lossy}.
\end{rem}
}

To pursue our analysis, we demonstrate an incremental  {\color{black}cyclo-}dissipativity property of the various microgrid models, with respect to a ``synchronous solution". The notion of dissipativity adopted in this paper is introduced next, and synchronous solutions will be identified afterwards.

\begin{definition}\label{d:dissipativity}
System $\dot x= f(x,u), y=h(x)$,  $x\in \mathcal{X}$, $\mathcal{X}$ the state space, $y,u\in \R^m$, is 
incrementally 
{cyclo-}dissipative with state-dependent supply rate $s(x,u,y)$ and 
with respect to a given input-state-output triple 
$(\overline u,\overline x,\overline y)$, if there exist a continuously differentiable function $\mathcal{S}:{\mathcal{X}}\to\R$, and state-dependent positive {semi}-definite\footnote{A state-dependent matrix $M:\mathcal{X}\to \R^{m\times m}$ is 
		positive semi-definite if $y^T M(x) y\ge 0$ for all $x\in \mathcal{X}$ and for all 
		$y\in \R^m$. If $M$ is positive semi-definite and $y^T M(x) y= 0 \Leftrightarrow y=\boldsymbol{0}$ then $M$ is called positive definite.}
matrices $W,R:\mathcal{X}\to \R^{m\times m}$, such that 
for all $x\in \mathcal{X}$, 
$u \in \R^m$ 
and $y=h(x)$, $\overline y = h(\overline x)$\footnote{{\color{black} We are slightly abusing the classical notion of incremental dissipativity \cite{desoer.vidyasagar}, for we do not consider pairs of arbitrary input-state-output triples, but pairs in which one of the two triples is fixed. For additional work on incremental dissipativity we refer the reader to \cite{pavlov.marconi, stan.sepulchre.tac07}.}}
\[
\frac{\partial \mathcal{S}}{\partial x} f(x,u) +\frac{\partial \mathcal{S}}{\partial \overline x} f(\overline x,\overline u) \le 
s(x,u-\overline u,y-\overline y)
\]
with 
\be\label{sr}
s(x,u,y) = -y^T W(x) y+y^T R(x) u.
\ee
\end{definition}


We remark that at this point the function $\mathcal{S}$ is not required to be non-negative nor bounded from below and that the weight matrices $W,R$ are allowed to be state dependent. The use of the qualifier ``{cyclo}" in the definition above stresses  the former feature \claudio{\cite[Def. 2]{Willems.EJC07}}.
	\begin{remark}
		In case the matrices $W$ and $R$ are state independent, some notable special cases of Definition 
		\ref{d:dissipativity} are obtained as follows:
		\begin{enumerate}[i)]
			\item $W\geq 0$, $R=I$, $\calS \ge 0$ (incremental passivity)
			\item $W>0$, $R=I$, $\calS \ge 0$  
			(output-strict incremental passivity)
			\item $W\geq 0$, $R=I$  (cyclo-incremental passivity)
			\item $W> 0$, $R=I$ (output-strict cyclo-incremental passivity).
		\end{enumerate}
	\end{remark}

%
\smallskip
\noindent {\it Synchronous solution.} 
Given the constant vectors $\overline u_P$ and $\overline u_Q$,
a synchronous solution \claudio{to \eqref{micro.grid}} is defined as the triple
\[
(\theta(t),\omega(t),V(t))=(\overline \theta, \overline \omega, \overline V),
\]
where \claudio{$\overline \theta= \overline \omega{} t + \theta^0$,  $\overline \omega = \mathds{1} \omega^0$, the scalar $\omega^0$ and the vectors $\theta^0$ and $\overline V\in \R^n_{>0}$  are constant. In addition,}
\be\label{feasibility}
\ba{rcl}
\boldsymbol{0} &=& -(\overline \omega-\omega^*) - K_P(\overline  P-P^\ast) +\overline u_P\\ 
\boldsymbol{0}  &=&f(\overline{V},\overline{Q},\overline u_Q)\,,
\ea\ee 
\claudio{where 
\be
\label{barP.barQ}
\ba{rcccl}
\overline P&=&D\Gamma(\overline V) \boldsymbol{\sin}(D^T \overline \theta)&=&D\Gamma(\overline V) \boldsymbol{\sin}(D^T \theta^0),\\[1mm]
\overline Q&=&[\overline V] \mathcal{A}(\boldsymbol{\cos}(D^T \overline \theta)) \overline V&=&[\overline V] \mathcal{A}(\boldsymbol{\cos}(D^T \theta^0)) \overline V.
\ea
\ee}
{\color{black}
Notice that the key feature of a synchronous solution is that the voltage phase angles are rotating with the same frequency, namely 
$\omega^0$, and the differences of these angles are thus constant. Another feature is that the voltage amplitudes are constant.
} 

%
%

\smallskip

\section{Design of {\color{black}Bregman storage} functions
}\label{sec.III}

A crucial step for the Lyapunov based analysis of the coupled nonlinear model \eqref{micro.grid} is constructing a storage function. To this end, we {\color{black}start off with} the following {\color{black} classical}  energy-based function, {\color{black} e.g.~\cite{pai}}
\be\label{e:U-compact}
\ba{rcl}
U(\theta, \omega, V) &=&\displaystyle \frac{1}{2}  \omega^T K_P^{-1}T_P\omega +\frac{1}{2}  \mathds{1}^T Q\\[2mm]
&=& \displaystyle\frac{1}{2}  \omega^T  K_P^{-1}T_P\omega +\frac{1}{2}  V^T \mathcal{A}(\boldsymbol{\cos}(D^T \theta) )  V,
\ea\ee
where we have exploited (\ref{Q.A}) {{\color{black}to write the second equality.} 
Notice that the first term represents the kinetic 
\cter{``energy" (in quotes because the term has the units of power and it does not correspond to the physical inertia),} 
and the second one the sum of the reactive power stored in the links and the power \new{partly associated with the shunt components.}  

}
Next, we compute 
the gradient of the storage function as follows:
\[\ba{rcl}
\displaystyle\frac{\partial U}{\partial \theta} &=& P = D \Gamma(V) \boldsymbol{\sin}(D^T\theta), \\[2mm]
\displaystyle\frac{\partial U}{\partial V} &=& [V]^{-1} Q= [A_0] V - [V]^{-1} |D| \Gamma(V) \boldsymbol{\cos}(D^T\theta)\,.
\ea\]
In the equality above, we are implicitly assuming that each component of the voltage vector never crosses zero. In fact, we shall assume the following:
\begin{assum}\label{forward.invariance}
There exists a subset $\mathcal{X}$ of the state space $\mathbb{T}^n\times \R^n\times \R_{>0}^n$ that is forward invariant along the solutions to (\ref{micro.grid}). 
\end{assum}
Conditions under which this assumption is fulfilled will be provided later in the paper. 

%
%
%
%
Notice that the voltage dynamics identified by $f$ has not yet been taken into account in  the function $U$.  Therefore, to cope with different voltage dynamics (or controllers) we add another component, namely $H(V)$, and define
\be\label{S}
S(\theta, \omega, V)=U(\theta, \omega, V)+H(V).
\ee
We rest our analysis on the following foundational incremental storage function
\be\label{inc.energy.0}\ba{ll}
\mathcal{S}(\theta, \omega, V) =& S(\theta, \omega, V) - S(\overline\theta, \overline\omega,\overline V)
-\barfpartT{S}{\theta} (\theta-\overline\theta)\\
&-\barfpartT{S}{\omega}(\omega-\overline\omega)
-\barfpartT{S}{V}(V-\overline V)
\ea\ee
where we use the conventional notation 
	\[
	\barfpart{F}{x}=\fpart{F}{x}{(\overline x)}, \qquad \barfpartT{F}{x}=(\fpart{F}{x}{(\overline x)})^T
	\]
	for a function $F :\mathcal{X}\to\R$.
{\color{black}The storage function $\calS$, in fact, defines a ``distance" between the value of $S$ at point $(\theta, \omega, V)$ and the value of a first-order Taylor expansion of $S$ around $(\overline\theta, \overline\omega,\overline V)$. 
This construction is referred to as {\em Bregman distance} or {\em Bregman divergence} following \cite{bregman}, and has found its applications in convex programming, clustering, proximal minimization,  online learning, \cter{and proportional-integral stabilisation of nonlinear circuits}; see e.g. \cite{bregman, clustering-bregman, minimization-bregman, online-bregman, jaya}. In thermodynamics, the Bregman distance has its antecedents in the notion of availability function \cite{keenan, alonso.ydstie, willems}}.

The function $\calS$ can be decomposed as 
\be\label{calS}
\calS=\calU+\calH
\ee
where
\[ 
\ba{ll}
\calU (\theta, \omega, V) =& U(\theta, \omega, V) - U(\overline\theta, \overline\omega,\overline V)
-\barfpartT{U}{\theta} (\theta-\overline\theta)\\
&-\barfpartT{U}{\omega}(\omega-\overline\omega)
-\barfpartT{U}{V}(V-\overline V)
\ea
\]
and
\[
\ba{rcl}
\calH(V) &=& H(V) - H(\overline V)
-\barfpartT{H}{V}(V-\overline V).
\ea
\] 
The above identities show that the critical points of $\mathcal{S}$ occur for $\omega=\overline \omega$  and $P=\overline P$ which is a desired property. The critical point of $\calS$ with respect to the $V$ coordinate is determined  by the choice of $H$ which depends on the voltage dynamics. 

%
%
To establish {\color{black} a suitable} incremental dissipativity property {\color{black}of the system with respect to a synchronous solution}, we introduce the output variables
	\be\label{output}
	y=\col ( y_P,\; y_Q)
	\ee
	with
	\[
	y_P=T_P^{-1}\;\fpart{S}{\omega}=K_P^{-1}\omega, \qquad y_Q=T_Q^{-1}\;\fpart{S}{V} ,
	\]
and input variables
\be\label{input}
u=\col ( u_P,\; u_Q).
\ee

In what follows, we differentiate among different voltage controllers and adjust the analysis accordingly by tuning $H$. 

%

\subsection{Conventional droop controller}

The conventional droop controllers are obtained by setting $f$ in \eqref{micro.grid} as 
\be\label{f-droop}
f(V, Q, u_Q)=-V-K_QQ+u_Q
\ee
where $K_Q=[k_Q]$ is a diagonal matrix with positive droop coefficients on its diagonal. 
Note that $u_Q$ is added for the sake of generality and one can set $u_Q=\overline u_Q=K_QQ^\ast+ V^\ast$ for nominal constant vectors $V^\ast$ and $Q^\ast$ to obtain the well known expression of conventional droop controllers, see e.g. \claudio{\cite{chandorkar}}, \cite{zhong}. 
For this choice of $f$, we pick the function $H$ in \eqref{S} as \smod (\cite{schiffer.aut, trip.cdc2014})\emod 
\be\label{H.cdc}
H(V)=\mathds{1}^T K_Q V-{\color{black} (\overline Q+K_Q^{-1}\overline V)}^T \boldsymbol{\ln}(V), 
\ee 
with ${\color{black}\overline Q+K_Q^{-1}\overline V=K_Q^{-1} \overline u_Q\in \R^n_{>0}}$ and $\boldsymbol{\ln}(V)$ defined element-wise.
This term has two interesting features. First, it makes the incremental storage function 
$\mathcal{S}$ radially unbounded with respect to $V$ on the  positive orthant. Moreover, it shifts the critical points of $\calS$ as desired.  
Noting that {\color{black} by \eqref{feasibility}}
\[
0=-\overline V-K_Q\overline Q+\overline u_Q,
\]
{\color{black}straightforward calculations yields}
\be\label{dV--droop}
T_Q\dot{V}=-K_Q[V]\fpart{\calS}{V}+u_Q -\overline u_Q.
\ee
%
%
In the following subsections we will derive analogous identities and then  use those for concluding incremental cyclo-dissipativity of the system.


\subsection{Quadratic droop controller}

Another voltage dynamics proposed in the literature
is associated with the quadratic droop controllers of \cite{simpson.quadratic},  which can be expressed as \eqref{micro.grid} with
\be\label{f-Qdroop}
f(V, Q, u_Q)=-K_QQ-[V] (V-u_Q), 
\ee
where again $K_Q=[k_Q]$ collects the droop coefficients. The quadratic droop controllers in \cite{simpson.quadratic} is obtained by setting $u_Q=V^\ast$ for some constant vector $V^\ast$. \smod Notice however the difference: while  \cite{simpson.quadratic} focuses on a network preserved microgrid model in which the equation above models the inverter dynamics and are decoupled from the frequency dynamics, here a fully coupled network reduced model is considered. \emod

\emod
Moreover, note that the scaling matrix $[V]$ distinguishes this case from the conventional droop controller. For this case, we {adapt} the storage function $\calS$ by setting
\be\label{H.qdc}
H(V)=\frac{1}{2}V^TK_Q^{-1}V.
\ee
Recall that $\calS=\calU+\mathcal{H}.$ 
Note that $\calS$ is defined on the whole $\mathbb{T}^n\times \R^n\times \R^n$ and not on $\mathbb{T}^n\times \R^n\times \R^n_{>0}$. The resulting function $\calS$  can be interpreted as a performance criterion in a similar vein as the cost function in \cite{simpson.quadratic}. 
Noting that
\[
0=-K_Q\overline Q-[\overline V](\overline V-\overline u_Q),
\]
{\color{black}it is easy to verify that}
\be\label{dV--Qdroop}
T_Q\dot{V}=\new{-}K_Q[V]\fpart{\calS}{V}+[V](u_Q-\overline u_Q).
\ee

\subsection{Reactive current controller}
The frequency dynamics of the inverters in microgrids typically mimics  that of the synchronous generators known as the {\em swing equation}. 
This facilitates the interface of inverters and generators in the grid.
To enhance such interface, an idea is to mimic the voltage dynamics of the synchronous generators as well. Motivated by this, we consider the voltage dynamics identified by
\be\label{f-current}
f(V, Q, u_Q)=-[V]^{-1}Q+u_Q.
\ee
This controller aims at regulating the ratio of reactive power over voltage amplitudes, which can be interpreted as ``reactive current" 
 (\cite{macho}). \emodc 
For this controller, we set 
\be\label{H.rcc}
H=0
\ee 
meaning that $S=U$ and no adaption of the storage function is needed. 
It is easy to observe that
\be\label{dV--current}
T_Q\dot{V}=-\frac{\partial \calS}{\partial V}+u_Q-\bar u_Q,
\ee
where $\bar u_Q=[\overline V]^{-1}\overline Q$ is again the feedforward input guaranteeing the preservation of the steady state.

\subsection{Exponentially-scaled averaging reactive power controller}
In this subsection, we consider  another controller which aims at achieving proportional {reactive} power  sharing 
\be\label{f-consensus}
f(V, Q, u_Q)=-[V]K_Q\new{L_Q} K_QQ+[V]u_Q
\ee
where $K_Q=[k_Q]$ is a diagonal matrix and $L_Q$ is the Laplacian matrix of a communication graph which is assumed to be undirected and connected. Compared with the controller in \cite{schiffer.tcst}, here the the voltage dynamics is scaled by the voltages at the inverters, namely $[V]$, {the reactive power $Q$ is {\em not} assumed to be independent of the phase variables $\theta$,} and an additional input $u_Q$ is introduced.
It is easy to see that the voltage dynamics in this case can be equivalently rewritten as
\be\label{e-arp}
\ba{rcl}
\claudio{T_Q} \dot{\chi}&=&-K_QL_QK_Q Q+u_Q\\[2mm]
V&=&\boldsymbol{\mathrm{exp}}{(\chi)}
\ea
\ee
where {\color{black}$Q$ can be expressed in terms of $\chi$ as $[\boldsymbol{\mathrm{exp}}{(\chi)}] \mathcal{A}(\boldsymbol{\cos}(D^T \theta)) \boldsymbol{\mathrm{exp}}{(\chi)}$ with $\boldsymbol{\mathrm{exp}}(\chi)=\col(e^{\chi_i})$.} Hence, we refer to this controller  as an   {\em exponentially-scaled averaging reactive power} controller (E-ARP). Now, we
choose $H$ as
\be\label{H.cbc}
H(V)=-\overline Q^T\boldsymbol{\ln}V,
\ee
{\color{black} with $\overline Q$ as in \eqref{barP.barQ},  and obtain}
\be\label{dSdV--consensus}
\fpart{\calS}{V}=[V]^{-1}(Q-\overline Q). 
\ee
Note that, in fact, our treatment here together with the above equality hints at the inclusion of the matrix $[V]$ 
into the controller, 
or equivalently at an {\em exponential scaling} of the reactive power averaging dynamics  
(see \eqref{f-consensus}, \eqref{e-arp}).  This, as will be observed, results in reactive power sharing for the fully coupled nonlinear model \eqref{micro.grid}.
By defining  
\be\label{baruQ-consensus}
\overline u_Q=K_QL_Q K_Q\overline Q,
\ee
the {\color{black} voltage} dynamics 
can be rewritten as  
\be\label{inview}
\dot V = -[V]K_QL_Q K_Q[V] \fpart{\calS}{V}+[V](u_Q- \overline u_Q). 
\ee
\claudio{where we have set $\claudio{T_Q}=I$. Having unitary time constants is assumed for the sake of simplicity and could be relaxed. On the other hand, requiring them to be the same is a purely technical assumption, motivated by the difficulty of analysing the system without such condition.}

\subsection{Incremental dissipativity
of microgrid models
}

In this subsection, we show how the candidate \claudio{Bregman} storage functions introduced before allow us to infer incremental dissipativity of the microgrids under the various controllers.

\begin{thm}\label{t:droop}
Assume that the feasibility condition \eqref{feasibility} admits a solution and let Assumption \ref{forward.invariance} hold. 
Then system \eqref{micro.grid} with  output \eqref{output}, input \eqref{input}, and, respectively,  
\begin{enumerate}
\medskip{}
\item $f(V, Q, u_Q)$ given by \eqref{f-droop};
\medskip{}
\item $f(V, Q, u_Q)$ given by \eqref{f-Qdroop}; 
\medskip{}
\item $f(V, Q, u_Q)$ given by \eqref{f-current}; 
\medskip{}
\item $f(V, Q, u_Q)$ given by \eqref{f-consensus}; 
\medskip{}
\end{enumerate}
is incrementally \new{cyclo-}dissipative with respect to a synchronous solution $(\overline \theta, \overline \omega, \overline V)$,  with 
\begin{enumerate}
\item incremental storage function $\calS$ defined by 
\eqref{e:U-compact},\eqref{S},\eqref{inc.energy.0},\eqref{H.cdc} and supply rate (\ref{sr}) with  
weight matrices 
\[
W(V) = \begin{bmatrix}
 K_P  & \boldsymbol{0}\\
 \boldsymbol{0} & T_Q K_Q[V] 
\end{bmatrix}, \quad 
R= \begin{bmatrix}
I  & \boldsymbol{0}\\
\boldsymbol{0} & I
\end{bmatrix};
%
\]
\item incremental storage function $\calS$ defined by 
\eqref{e:U-compact},\eqref{S},\eqref{inc.energy.0},\eqref{H.qdc} and supply rate (\ref{sr})  with  
weight matrices 
\[\ba{c}
W(V) = \begin{bmatrix}
 K_P  & \boldsymbol{0}\\
 \boldsymbol{0} & T_Q K_Q[V] 
\end{bmatrix}, \\[5mm]
R(V)= \begin{bmatrix}
 I  & \boldsymbol{0}\\
 \boldsymbol{0} & [V] 
\end{bmatrix};
\ea\]
\item incremental storage function $\calS$ defined by 
\eqref{e:U-compact},\eqref{S},\eqref{inc.energy.0},\eqref{H.rcc}  and supply rate (\ref{sr})  with  
weight matrices 
\[
W = \begin{bmatrix}
 K_P & \boldsymbol{0}\\
 \boldsymbol{0} & T_Q
\end{bmatrix}, \quad 
R= \begin{bmatrix}
 I  & \boldsymbol{0}\\
 \boldsymbol{0} & I
\end{bmatrix};
\]
\item incremental storage function $\calS$ defined by 
\eqref{e:U-compact},\eqref{S},\eqref{inc.energy.0},\eqref{H.cbc}  and supply rate (\ref{sr})  with  
weight matrices 
\[\ba{c}
W(V) = \begin{bmatrix}
 K_P  & \boldsymbol{0}\\
 \boldsymbol{0} &   [V]K_Q L_Q K_Q[V]  
\end{bmatrix}, \\[5mm]
R(V)= \begin{bmatrix}
I  & \boldsymbol{0}\\
 \boldsymbol{0} &  [V]  
\end{bmatrix}.
\ea
\] 
\end{enumerate}
%
%
\end{thm}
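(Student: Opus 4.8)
The plan is to verify, for each of the four cases, the dissipation inequality of Definition \ref{d:dissipativity} by computing the total derivative of the common Bregman storage function $\calS$ along the flow and matching it, term by term, with the supply rate \eqref{sr}. Since $\calS=\calU+\calH$ has an identical $(\theta,\omega)$-part in all four cases, I would first dispatch the frequency/angle contribution once and for all, and only afterwards treat the four voltage dynamics separately, each through its pre-computed identity \eqref{dV--droop}, \eqref{dV--Qdroop}, \eqref{dV--current}, \eqref{inview}. Throughout, Assumption \ref{forward.invariance} keeps $V>0$, so all gradients and the factor $[V]^{-1}$ are well defined.

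The first bookkeeping step is to record the gradients of $\calS$. Because $H$ depends on $V$ alone and the linear Bregman correction differentiates away, $\fpart{\calS}{\theta}=P-\overline P$, $\fpart{\calS}{\omega}=K_P^{-1}T_P(\omega-\overline\omega)$, and $\fpart{\calS}{V}=\fpart{S}{V}-\barfpart{S}{V}$; consequently the incremental outputs from \eqref{output} read $y_P-\overline y_P=K_P^{-1}(\omega-\overline\omega)$ and $y_Q-\overline y_Q=T_Q^{-1}\fpart{\calS}{V}$. The left-hand side $\fpart{\calS}{x}f(x,u)+\fpart{\calS}{\overline x}f(\overline x,\overline u)$ is then just $\dot{\calS}$ with both the state and the rotating reference evolving. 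Here lies the step I expect to be the main obstacle: the reference moves as $\dot{\overline\theta}=\overline\omega=\mathds{1}\omega^0$, so one must check that this motion does not corrupt the balance. I would resolve it via the gauge invariance of the model, namely that $P$, $Q$, hence $U$ and $S$, depend on $\theta$ only through $D^T\theta$, together with $D^T\mathds{1}=0$. This makes $D^T\overline\theta=D^T\theta^0$ constant, so $\overline P$ and $\barfpart{S}{V}$ are genuinely constant along the synchronous solution; combined with $\mathds{1}^T(P-\overline P)=0$ (both lie in $\im D$), all $\overline\theta$-terms collapse and the entire angle/frequency bookkeeping reduces to the single cross term $(P-\overline P)^T(\omega-\overline\omega)$.

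Next I would substitute the swing dynamics and subtract the feasibility identity \eqref{feasibility} to rewrite $-(\omega-\omega^*)-K_P(P-P^*)+u_P=-(\omega-\overline\omega)-K_P(P-\overline P)+(u_P-\overline u_P)$. Pairing this with the cross term isolated above, the two copies of $(P-\overline P)^T(\omega-\overline\omega)$ cancel, and what survives from the $(\theta,\omega)$-part is exactly $-(y_P-\overline y_P)^T K_P (y_P-\overline y_P)+(y_P-\overline y_P)^T(u_P-\overline u_P)$, i.e. the upper blocks $W$ and $R$ common to all four statements.

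For the voltage part I would write $(\fpart{\calS}{V})^T\dot V=(y_Q-\overline y_Q)^T(T_Q\dot V)$ and insert the controller-specific identity. Case 1 via \eqref{dV--droop} gives $T_Q\dot V=-K_Q[V]T_Q(y_Q-\overline y_Q)+(u_Q-\overline u_Q)$, yielding the lower blocks $T_QK_Q[V]$ and $I$; case 2 via \eqref{dV--Qdroop} yields $T_QK_Q[V]$ and $[V]$; case 3 via \eqref{dV--current} (where $H=0$ by \eqref{H.rcc}) yields $T_Q$ and $I$; and case 4 via \eqref{inview} with $T_Q=I$ yields $[V]K_QL_QK_Q[V]$ and $[V]$. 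In each case the diagonal matrices commute, so the quadratic term lands symmetrically in the stated $W$-block, and positive semidefiniteness is immediate, following from $L_Q\ge 0$ in case 4. Assembling the two parts produces $\dot{\calS}=-(y-\overline y)^T W(x)(y-\overline y)+(y-\overline y)^T R(x)(u-\overline u)$; the inequality of Definition \ref{d:dissipativity} thus holds with equality, which is precisely incremental cyclo-dissipativity with the claimed weight matrices.
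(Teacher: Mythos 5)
Your proposal is correct and follows essentially the same route as the paper: compute $\dot{\calS}$ along the flow, use the feasibility condition \eqref{feasibility} to shift to incremental variables, insert the controller-specific identities \eqref{dV--droop}, \eqref{dV--Qdroop}, \eqref{dV--current}, \eqref{inview}, and assemble the result into the quadratic supply-rate form. The only difference is that you make explicit the step the paper leaves implicit—namely that the moving reference $\overline\theta=\overline\omega t+\theta^0$ is harmless because $S$ depends on $\theta$ only through $D^T\theta$ with $D^T\mathds{1}=0$ and $\mathds{1}^T(P-\overline P)=0$—which is a welcome clarification but not a different argument.
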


\begin{proof}
1) Recall that
\be\label{recall}\ba{c}
\displaystyle\frac{\partial \calS}{\partial  \omega}=K_P^{-1}T_P( \omega-\overline \omega), \\[3mm] 
\displaystyle\frac{\partial \calS}{\partial \theta}= D\Gamma(V) \boldsymbol{\sin}(D^T\theta)- D\Gamma(\overline V) \boldsymbol{\sin}(D^T \theta^0) =(P - \overline P).
%
\ea\ee
Then 

\[
\ba{l}
\displaystyle\frac{d}{dt}\calS=( \omega-\overline \omega)^T T_PK_P^{-1} \dot \omega\\[2mm]
+
(D\Gamma(V) \boldsymbol{\sin}(D^T(\theta))
\;-D\Gamma(\overline V) \boldsymbol{\sin}(D^T \theta^0))^T \dot \theta
+(\displaystyle\frac{\partial \calS}{\partial V})^T\dot V
\\[2mm]
=
( \omega-\overline \omega)^TK_P^{-1} (-(\omega -\overline \omega) - K_P (P-\overline P)
+ (u_P-\overline u_P))  \\[2mm]
\;+(D\Gamma(V) \boldsymbol{\sin}(D^T \theta)- D\Gamma(\overline V) \boldsymbol{\sin}(D^T \theta^0))^T( \omega-\overline \omega)\\[2mm]
\;+(\displaystyle\frac{\partial \calS}{\partial V})^T T_Q^{-1}(- K_Q[V]  \displaystyle\frac{\partial \calS}{\partial V}+u_Q-\bar u_Q)
\\[2mm]
=
( \omega-\overline \omega)^TK_P^{-1}(-(\omega -\overline \omega) - {\color{black}K_P (P-\overline P)}+ (u_P-\overline u_P))\\[2mm]
\;+{\color{black}(P-\overline P)^T 
( \omega-\overline \omega)}-(\displaystyle\frac{\partial \calS}{\partial V})^T T_Q^{-1}K_Q[V]  \displaystyle\frac{\partial \calS}{\partial V}\\[2mm]
\;+ (\displaystyle\frac{\partial \calS}{\partial V})^T T_Q^{-1} (u_Q-\bar u_Q)
%
%
\ea
\]
where the chain of equalities hold because of the feasibility condition and (\ref{dV--droop}). 
Hence
\be\label{dS-droop}
\ba{l}
\displaystyle \frac{d}{dt} \calS=
-( \omega-\overline \omega)^T K_P^{-1}(\omega -\overline \omega) + ( \omega-\overline \omega)^TK_P^{-1} (u_P-\overline u_P)
\\
\quad \qquad-(\displaystyle\frac{\partial \calS}{\partial V})^T T_Q^{-1} K_Q[V]  \displaystyle\frac{\partial \calS}{\partial V}+ (\displaystyle\frac{\partial \calS}{\partial V})^T T_Q^{-1} (u_Q-\bar u_Q).
\ea
\ee
Observe now that by definition 
\[
\displaystyle\frac{\partial \calS}{\partial V} =
\displaystyle \frac{\partial S}{\partial V}-\displaystyle \left.\frac{\partial S}{\partial V}\right|_- 
\]
and that $\left.\frac{\partial S}{\partial V}\right|_-$ represents the  output component $ \frac{\partial S}{\partial V}$ at a synchronous solution. Hence  equality (\ref{aux.lab}) at the top of the next page can be established. 

\begin{figure*}[!t]
\normalsize
\setcounter{mytempeqncnt}{\value{equation}} 
\setcounter{equation}{30}
{
\be\label{aux.lab}\ba{rcl}
\displaystyle\frac{d}{dt}\calS &=& 
-
\begin{bmatrix}
(\omega -\overline \omega)^TK_P^{-1} & (\frac{\partial S}{\partial V}- \left.\frac{\partial S}{\partial V}\right|_-)^T T_Q^{-1} 
\end{bmatrix}
\begin{bmatrix}
 K_P & \boldsymbol{0}\\
 \boldsymbol{0} & T_QK_Q[V] 
\end{bmatrix}
\begin{bmatrix}
K_P^{-1} (\omega -\overline \omega) \\ T_Q^{-1}  (\frac{\partial S}{\partial V}- \left.\frac{\partial S}{\partial V}\right|_-)
\end{bmatrix}\\
&& +\begin{bmatrix}
(\omega -\overline \omega)^TK_P^{-1} & (\frac{\partial S}{\partial V}- \left.\frac{\partial S}{\partial V}\right|_-)^TT_Q^{-1}
\end{bmatrix}
\begin{bmatrix}
 I  & \boldsymbol{0}\\
 \boldsymbol{0} & I
\end{bmatrix}
\begin{bmatrix}
u_P-\bar u_P \\ u_Q-\bar u_Q
\end{bmatrix}.
\ea\ee
}
\setcounter{equation}{\value{mytempeqncnt}+1}
\hrulefill
\vspace*{4pt}
\end{figure*}

\noindent We conclude  {incremental cyclo-dissipativity} of system (\ref{micro.grid}), \eqref{output},  (\ref{input}), \eqref{f-droop} as claimed. 
\\
2)  If in the chain of equalities defining $\displaystyle\frac{d}{dt}\calS$ above, we use (\ref{dV--Qdroop}) instead of (\ref{dV--droop}),   we obtain that 
\be\label{dU-Qdroop}
\ba{l}
\displaystyle\frac{d}{dt}{\calS}= -( \omega-\overline \omega)^T K_P^{-1}(\omega -\overline \omega) + ( \omega-\overline \omega)^TK_P^{-1}(u_P-\overline u_P)\\
\quad \qquad -(\displaystyle\frac{\partial {\calS}}{\partial V})^T T_Q^{-1} K_Q[V]  \displaystyle\frac{\partial  {\calS}}{\partial V}+ (\displaystyle\frac{\partial \calS}{\partial V})^T T_Q^{-1}[V](u_Q-\overline u_Q)
\ea\ee
which shows \alert{incremental cyclo-dissipativity}  of  system (\ref{micro.grid}), \eqref{output},  (\ref{input}), \eqref{f-Qdroop}. 
\\
3) For this case, adopting the equality (\ref{dV--current}) results in the equality 
\be\label{dU-current}
\ba{l}
\displaystyle\frac{d}{dt}{\calS} = -( \omega-\overline \omega)^TK_P^{-1} (\omega -\overline \omega) + ( \omega-\overline \omega)^TK_P^{-1}(u_P-\overline u_P)\\
\qquad \qquad 
-\displaystyle (\frac{\partial \calS}{\partial V})^TT_Q^{-1}\frac{\partial \calS}{\partial V}+ \frac{\partial \calS}{\partial V}^TT_Q^{-1}(u_Q-\overline u_Q),
\ea
\ee
from which \alert{incremental cyclo-dissipativity}  of \eqref{micro.grid}, \eqref{output}, \eqref{input}, (\ref{f-current})  holds.  \\
4) Finally, in view of (\ref{inview}), 
\be\label{dU-consensus}
\ba{l}
\displaystyle\frac{d}{dt}{\calS}= -( \omega-\overline \omega)^TK_P^{-1}(\omega -\overline \omega) + ( \omega-\overline \omega)^TK_P^{-1}(u_P-\overline u_P)
\\
\quad -(\displaystyle\frac{\partial {\calS}}{\partial V})^T [V]K_Q L_Q K_Q[V]\displaystyle\frac{\partial  {\calS}}{\partial V}
+(\displaystyle\frac{\partial \calS}{\partial V})^T  [V](u_Q-\overline u_Q)\ea\ee
which implies \alert{incremental cyclo-dissipativity}  of (\ref{micro.grid}), \eqref{output}, \eqref{input}, (\ref{f-consensus}). 
\end{proof}
\section{From cyclo-dissipativity to dissipativity}\label{sec.IV}
The dissipation inequalities proven before can be exploited to study the stability of a synchronous solution. 
\nima{Recall that} Theorem \ref{t:droop} has been established  in terms of 
cyclo-dissipativity rather than dissipativity, i.e. without imposing lower boundedness of the storage function $\calS$. However, in order to conclude the attractivity of a synchronous solution we ask for incremental dissipativity of the system, and require the storage function to possess  a strict minimum at the point of interest.
To this end, we investigate conditions under which the Hessian of the storage function $\calS$ is positive definite at the point of interest identified by a synchronous solution.

 It is not difficult to observe that due to the rotational invariance of $\theta$ variable, the existence of a {\em strict} minimum for $\calS$ cannot be anticipated. To clear  this obstacle, we notice that the phase angles $\theta$ appear as relative terms, i.e. 
 {\color{black}$D^T \theta$,}
 in \eqref{e:U-compact} and thus in $S$ as well as $\calS$.
 \claudio{
Motivated by this observation, we introduce the new variables {\color{black}\cite{jl.willems}}
\be\label{varphi}
\varphi_i = \theta_i -\theta_n, \quad i=1,2,\ldots, n-1.
\ee
These can be also written as
\[
\begin{bmatrix}
\varphi_1 \\ \vdots \\ \varphi_{n-1} \\ 0
\end{bmatrix}=
\begin{bmatrix}
\theta_1 \\ \vdots \\ \theta_{n-1} \\ \theta_n
\end{bmatrix}-\mathds{1} \theta_n.
\]
Let us partition $D$ accordingly as $D={\rm col} (D_1, D_2)$, \cter{with $D_1$ an $(n-1)\times m$ matrix and $D_2$ a  $1\times m$ matrix. Notice that $D_1$ is the reduced incidence matrix corresponding to the node of index $n$ taken as reference}. Then
\[
D^T \begin{bmatrix}
\varphi_1 \\ \vdots \\ \varphi_{n-1} \\ 0
\end{bmatrix}= D_1^T \varphi, \textrm{with } \varphi:=\begin{bmatrix}
\varphi_1 \\ \vdots \\ \varphi_{n-1} 
\end{bmatrix}
\]
and therefore
\[
D^T \theta= D_1^T \varphi. 
\]
More explicitly, given $\theta \in \R^n$, we can define $\varphi\in \R^{n-1}$ from $\theta$ as in (\ref{varphi}), and the equality $D^T \theta= D_1^T \varphi$ holds. 
Hence, 
{\color{black}
\[\ba{rcl}
U(\theta, \omega, V)&=&\displaystyle\frac{1}{2}  \omega^T K_P^{-1}T_P\omega +\frac{1}{2}  V^T \mathcal{A}(\boldsymbol{\cos}(D^T \theta) )  V\\[2mm]
&=&
\displaystyle\frac{1}{2}  \omega^T K_P^{-1}T_P\omega +\frac{1}{2}  V^T \mathcal{A}(\boldsymbol{\cos}(D_1^T\varphi) )  V
\ea\]
and we set, by an abuse of notation, 
\[
U(\varphi, \omega, V):=\frac{1}{2}  \omega^T K_P^{-1}T_P\omega +\frac{1}{2}  V^T \mathcal{A}(\boldsymbol{\cos}(D_1^T\varphi) )  V.
\]
}
Furthermore, we can define
\be\label{calU-eta}
\ba{c}
\calU (\varphi, \omega, V)= U (\varphi, \omega, V) - U(\overline \varphi, \overline\omega, \overline V)
\\[2mm]
\quad -\displaystyle \left.\frac{\partial U}{\partial \varphi}\right|_-^T
(\varphi-\overline\varphi)
-\displaystyle \left.\frac{\partial U}{\partial \omega}\right|_-^T (\omega-\overline\omega)
-\displaystyle \left.\frac{\partial U}{\partial V}\right|_-^T (V-\overline V)
\ea
\ee
where, $\overline\varphi_i := \overline\theta_i -\overline\theta_n$, $i=1,2,\ldots, n-1$,  (hence $D^T \overline \theta=D_1^T \overline \varphi$), and
\be\label{cal.S}
\calS(\varphi, \omega, V)=\calU(\varphi, \omega, V)+\calH(V)
\ee
to have
\be\label{shift-co}
\calS(\theta, \omega, V)=\calS(\varphi, \omega, V).
\ee
}

\subsection{\claudio{Strict convexity of Bregman storage functions}}
\claudio{Observe that} $(\overline \varphi, \overline \omega, \overline V)$ is a critical point of $\calS$.
Next, we compute the Hessian as
\be\label{Hessian-3*3}
\ba{c}
\displaystyle\frac{\partial^2 S}{\partial (\varphi, \omega, V)^2}=\\[4mm]
{\footnotesize
\begin{bmatrix}
D_1 \Gamma(V) [\boldsymbol{\cos}(D_1^T\varphi)] D_1^T & \boldsymbol{0}& *\\[2mm]
\boldsymbol{0} & K_P^{-1} T_P & \boldsymbol{0}\\[2mm]
D_1 [V]^{-1} |D| \Gamma(V) [\boldsymbol{\sin}(D_1^T\varphi)] & \boldsymbol{0}  & \mathcal{A}(\boldsymbol{\cos}(D_1^T\varphi))+
\displaystyle\frac{\partial^2 \cter{H}}{\partial V^2}
\\[2mm]
\end{bmatrix}
}
.
\ea
\ee
%

\smallskip
\noindent Notice that in all the previously studied cases, the matrix $\frac{\partial^2 H}{\partial V^2}$ is diagonal. 
In particular, 
\be\label{h1}
\ba{ll}
\displaystyle\frac{\partial^2 H}{\partial V^2}=K_Q+[V]^{-2}[{\color{black}\overline Q+K_Q^{-1}\overline V}],&
\displaystyle\frac{\partial^2 H}{\partial V^2}=K_Q^{-1},\\[3mm]
\displaystyle\frac{\partial^2 H}{\partial V^2}=0,&
\displaystyle\frac{\partial^2 H}{\partial V^2}=[V]^{-2}[{\color{black} \overline Q}],
\ea
\ee
for conventional droop, quadratic droop, reactive current controller, and exponentially-scaled averaging reactive power controller, respectively. Now, let 
\be\label{h2}
[h(V)]:=\displaystyle\frac{\partial^2 H}{\partial V^2},
\ee
and $h(V)=\col(h_i(V_i))$. Then, the following result, which establishes {\color{black}distributed} conditions for checking the positive definiteness of the Hessian, \claudio{and hence strict convexity of the Bregman storage function,} can be proven:

\begin{proposition}\label{p1}
Let $\overline \eta:=D^T\theta^0=D_1^T \overline \varphi \in (-\frac{\pi}{2}, \frac{\pi}{2})^m$, $\overline  V\in \mathbb{R}^n_{>0}$, and
\[
m_{ii} :=
\hat B_{ii} + \displaystyle\sum_{{k\sim \{i, \ell\}\in E}} B_{i \ell} 
\left(
1-\frac{\overline V_\ell}{\overline V_i} \frac{\sin^2 ({
\overline \eta_k
})}{\cos({
\overline \eta_k
})} 
\right)
+ 
h_i(\overline V_i).
\]
Then the inequality 
\be\label{sconvex}
\left.\frac{\partial^2 S}{\partial (\varphi, \omega, V)^2}\right|_->0
\ee
holds if
\claudio{
\be\label{cond2}
m_{ii}>  
\displaystyle
\sum_{
{k\sim\{i, \ell\}\in E}} B_{i \ell} \;
{
\sec(\overline \eta_k)
}
\ee
}
for all $i=1,2,\ldots, n$.
\end{proposition}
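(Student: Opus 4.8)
The plan is to exploit the block structure of the Hessian \eqref{Hessian-3*3} and reduce \eqref{sconvex} to a diagonal-dominance test on a matrix acting only on the voltage coordinates. First I would observe that the $\omega$-block $K_P^{-1}T_P$ is diagonal, positive definite, and decoupled (its off-diagonal blocks vanish), so \eqref{sconvex} is equivalent to positive definiteness of the $2\times 2$ block matrix obtained by deleting the $\omega$ row and column. Denote its blocks, all evaluated at the synchronous solution (so $\varphi=\overline\varphi$, $V=\overline V$, $D_1^T\overline\varphi=\overline\eta$), by $A=D_1\Gamma(\overline V)[\boldsymbol\cos(\overline\eta)]D_1^T$ (the $\varphi\varphi$ block), $B$ (the $\varphi V$ block) and $C=\mathcal A(\boldsymbol\cos(\overline\eta))+[h(\overline V)]$ (the $VV$ block). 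Since $\overline\eta\in(-\tfrac\pi2,\tfrac\pi2)^m$ forces $\cos\overline\eta_k>0$, $\Gamma(\overline V)>0$, and $D_1$ (the reduced incidence matrix of a connected graph) has full row rank, $A$ is positive definite; hence, by the Schur complement, positivity of the reduced Hessian is equivalent to $\mathcal M:=C-B^TA^{-1}B>0$.

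The difficulty is that $A^{-1}$ has no usable closed form. I would avoid it by passing to the edge variable $\eta=D_1^T\varphi$. Writing the reduced quadratic form $Q(\dot\varphi,\dot V)$ in the variables $(\dot\eta,\dot V)$ with $\dot\eta=D_1^T\dot\varphi$, the potential splits edge-by-edge as $Q=\sum_{k\sim\{i,j\}}Q_k(\dot\eta_k,\dot V_i,\dot V_j)+\sum_i(\hat B_{ii}+h_i(\overline V_i))\dot V_i^2$, where each $Q_k$ couples the single scalar $\dot\eta_k$ to the two endpoint rates $\dot V_i,\dot V_j$ and the coefficient of $\dot\eta_k^2$ is $\overline V_i\overline V_jB_{ij}\cos\overline\eta_k>0$. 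The crucial step is that $\dot V^T\mathcal M\dot V=\min_{\dot\varphi}Q$, and since $\dot\eta=D_1^T\dot\varphi$ ranges over $\operatorname{im}D_1^T$ this equals $\min_{\dot\eta\in\operatorname{im}D_1^T}Q$; relaxing the constraint to $\dot\eta\in\R^m$ only lowers the minimum, and over $\R^m$ the minimization decouples across edges into independent scalar completions of the square, yielding a matrix $\mathcal M_{\mathrm{unc}}$ with $\mathcal M\succeq\mathcal M_{\mathrm{unc}}$.

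I would then carry out the per-edge completion of the square (routine, using $\cos\overline\eta_k+\tfrac{\sin^2\overline\eta_k}{\cos\overline\eta_k}=\sec\overline\eta_k$): minimizing $Q_k$ over $\dot\eta_k$ contributes $B_{ij}\big(1-\tfrac{\overline V_j}{\overline V_i}\tfrac{\sin^2\overline\eta_k}{\cos\overline\eta_k}\big)$ to the $i$-th diagonal and $-B_{ij}\sec\overline\eta_k$ to the $(i,j)$ off-diagonal. Adding the $\hat B_{ii}+h_i(\overline V_i)$ terms shows that $\mathcal M_{\mathrm{unc}}$ is exactly the symmetric matrix with diagonal entries $m_{ii}$ (as defined in the statement) and off-diagonal entries $-B_{i\ell}\sec\overline\eta_k$ for each edge $k\sim\{i,\ell\}$.

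Finally I would close with Gershgorin's theorem: a symmetric matrix with positive diagonal that is strictly diagonally dominant is positive definite. All off-diagonal magnitudes in row $i$ are $B_{i\ell}\sec\overline\eta_k>0$, so \eqref{cond2} is precisely the statement that $m_{ii}$ exceeds their sum; thus $\mathcal M_{\mathrm{unc}}>0$, whence $\mathcal M\succeq\mathcal M_{\mathrm{unc}}>0$ and \eqref{sconvex} follows. The main obstacle is justifying the relaxation $\mathcal M\succeq\mathcal M_{\mathrm{unc}}$: this subspace-versus-full-space minimization is what circumvents $A^{-1}$, and it should be argued directly rather than by manipulating the Schur complement.
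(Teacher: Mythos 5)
Your proof is correct and is essentially the paper's own argument in variational dress: your $\mathcal{M}_{\mathrm{unc}}$ is exactly the Schur complement $\Psi$ that the paper extracts from the edge-indexed matrix $M$ at the center of \eqref{Hess-1} (the paper's congruence $\mathrm{blockdiag}(D_1,I)\,M\,\mathrm{blockdiag}(D_1^T,I)$ together with ``it suffices that $M>0$'' is precisely your relaxation of $\dot\eta\in\operatorname{im}D_1^T$ to $\dot\eta\in\R^m$), and both proofs conclude with the same Gershgorin estimate on the same matrix with diagonal $m_{ii}$ and off-diagonal entries $-B_{i\ell}\sec(\overline\eta_k)$. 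The relaxation step you flag as the main obstacle is sound as stated, since the edge-angle block $\Gamma(\overline V)[\boldsymbol{\cos}(\overline\eta)]$ is positive definite, so the unconstrained per-edge minima exist and lower-bound the constrained minimum.
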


\begin{proof}
The proof is given in the appendix. 
\end{proof}

\begin{rem}
\claudio{{\bf (Isolated minima)}}
The result shows that 
the condition \eqref{cond2} for positive definiteness are met provided that {at the point} $({\color{black}\overline \varphi}, \overline \omega, \overline V)$ the relative voltage phase angles are small enough and the voltages magnitudes are approximately the same. This is a remarkable property, stating that {if the equilibria of interest are characterized by small relative voltage phases and similar voltage magnitudes, then they  are  minima of the incremental storage function $\calS(\theta, \omega, V)$, and equivalently {\em isolated}} minima of  $\calS(\varphi, \omega, V)$.  \qedp
\end{rem}
{
\begin{rem} \cter{{\bf (Hessian)}}\label{r:Hess}
The Hessian of energy functions has always played an important role in stability studies of power networks (see e.g.~\cite{tsolas}, and \cite{schiffer.aut} for a microgrid stability investigation).  
Conditions for assessing the positive definiteness of the Hessian of an energy function associated to power networks have been  reported in the literature since \cite{tsolas}, and  used even recently to study e.g.~the convexity of the energy function (\cite{dvijo}). Our conditions however 
are different and hold for more general energy functions. 
\qedp \end{rem}}

\subsection{An instability condition}
Conversely, one can characterize an instability condition that shows how, for a given vector of voltage values,  equilibria with ``large" relative phase angles are unstable. 
To this end, first observe that {\color{black} a negative eigenvalue of the Hessian matrix} implies instability of the equilibrium $(\overline \varphi, \overline \omega, \overline V)$ \claudio{of system \eqref{micro.grid}, with $f(V,Q,u_Q)$ given by \eqref{f-droop}, \eqref{f-Qdroop}, \eqref{f-current}, expressed in the $\varphi$ coordinates and with $u_P=\overline u_P$, $u_Q=\overline u_Q$:}
\begin{lem}\label{inertia}
Suppose that the Hessian
\[
\left.\frac{\partial^2 S}{\partial (\varphi, \omega, V)^2}\right|_-
\]
has a negative eigenvalue. Then the equilibrium $(\overline \varphi, \overline \omega, \overline V)$ is unstable. 
\end{lem}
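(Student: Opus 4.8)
The plan is to establish instability through Lyapunov's first (indirect) method. I would fix the inputs at their feedforward values $u_P=\overline u_P$, $u_Q=\overline u_Q$, pass to the reduced phase coordinates $(\varphi,\omega,V)$ of \eqref{varphi}, in which $(\overline\varphi,\overline\omega,\overline V)$ is an isolated equilibrium, and show that the Jacobian of the resulting autonomous vector field at this point has an eigenvalue with strictly positive real part. Because the voltage dynamics of \eqref{f-droop}, \eqref{f-Qdroop}, \eqref{f-current} were shown in Theorem \ref{t:droop} to take the gradient form $\dot V=-D_V(V)\,\partial\calS/\partial V$ (at $u_Q=\overline u_Q$), with $D_V$ diagonal and positive definite (namely $T_Q^{-1}$, respectively $T_Q^{-1}K_Q[\overline V]$, in the three cases), and since $\partial\calS/\partial V$ vanishes at the equilibrium, the linearization is governed entirely by the Hessian $F:=\left.\frac{\partial^2 S}{\partial(\varphi,\omega,V)^2}\right|_-$, which coincides with the Hessian of $\calS$ since the Bregman correction is affine.

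The key structural step is to write the Jacobian as $A=(J-R)F$, where $J=-J^T$ collects the skew phase/frequency coupling arising from $\dot\varphi_i=\omega_i-\omega_n$ together with the reaction term $-K_P\,\partial\calS/\partial\varphi$, and $R=R^T\succeq 0$ collects the frequency damping $-(\omega-\overline\omega)$ and the dissipative voltage gain $D_V$. Here $R$ is block diagonal, positive definite on the $(\omega,V)$ block and identically zero on the $\varphi$ block. A short computation using $A=(J-R)F$, $J^T=-J$ and $R^T=R$ then yields the Lyapunov-type identity $A^TF+FA=-2FRF\preceq 0$, which is precisely the linearization of the dissipation inequality $\dot\calS=-(y-\overline y)^TW(x)(y-\overline y)\le 0$ of Theorem \ref{t:droop} evaluated at $u=\overline u$.

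With this identity in hand I would invoke the inertia theorem for Lyapunov equations: if $A^TF+FA=-Q$ with $Q=2FRF\succeq 0$ and the pair $(A,Q)$ is observable, then $A$ has no eigenvalue on the imaginary axis and $\pi(A)=\nu(F)$, where $\pi(\cdot)$ and $\nu(\cdot)$ denote the number of eigenvalues with positive real part and the number of negative eigenvalues, respectively. A single negative eigenvalue of $F$ then forces $\pi(A)\ge 1$, i.e.\ a genuinely unstable mode, and instability of $(\overline\varphi,\overline\omega,\overline V)$ follows.

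The main obstacle is discharging the observability hypothesis, which is delicate precisely because $R$ annihilates the phase block, so the damping does not act directly on $\varphi$. I would verify it by a LaSalle-type computation on the linear flow: any trajectory with $R^{1/2}F\xi\equiv 0$ must have $(F\xi)_\omega\equiv 0$, hence $\xi_\omega\equiv 0$; differentiating $\dot\varphi_i=\omega_i-\omega_n$ then forces $(F\xi)_\varphi\equiv 0$, and together with $(F\xi)_V\equiv 0$ this gives $F\xi\equiv 0$, so the only such trajectories lie in $\ker F$. This is exactly the propagation of dissipation from $(\omega,V)$ to the phases through the coupling $\dot\varphi_i=\omega_i-\omega_n$, and it is where positive definiteness of $K_P^{-1}T_P$ enters. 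The same computation clarifies why the statement excludes the E-ARP controller \eqref{f-consensus}: there $D_V=[\overline V]K_QL_QK_Q[\overline V]$ is singular because of the Laplacian kernel, $R$ degenerates on the voltage block, and both the observability argument and the conclusion may fail. As an alternative that avoids the inertia machinery, the same invariance computation combined with Chetaev's theorem applied to $-\calS$ (positive on a nonempty cone emanating from the equilibrium since $F$ is indefinite, and nondecreasing along the flow) yields instability directly, at the cost of slightly more care when $F$ is singular.
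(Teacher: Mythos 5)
Your proposal follows essentially the same route as the paper: the closed-loop Jacobian at the equilibrium (with $u_P=\overline u_P$, $u_Q=\overline u_Q$) is factored as $(\mathcal{J}-\mathcal{R})\left.\frac{\partial^2 S}{\partial(\varphi,\omega,V)^2}\right|_-$ with $\mathcal{J}$ skew-symmetric and $\mathcal{R}$ the (semi)definite dissipation block, and matrix inertia theorems (Carlson--Schneider, Ostrowski--Schneider) are invoked to convert the negative eigenvalue of the Hessian into an eigenvalue of the Jacobian in the open right half plane. Your explicit Lyapunov identity $A^TF+FA=-2FRF$, the observability check ruling out imaginary-axis eigenvalues, and the remark on why the E-ARP controller is excluded merely fill in details the paper leaves implicit (it only asserts nonsingularity of $\mathcal{J}-\mathcal{R}$), so the argument is the same.
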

\begin{proof}
The proof is given in the appendix. 
\end{proof}

%

\cter{Before providing sufficient conditions under which the Hessian in Lemma \ref{inertia} has a negative eigenvalue, we first provide conditions under which  the matrix  at the center of the product in (\ref{Hess-1}), denoted as $M$ when evaluated at $(\overline \varphi, \overline \omega, \overline V)$, has a negative eigenvalue.  $M$ 
is symmetric and as such diagonalizable.  Using the diagonal form, it is immediate to notice that if there exists a vector $v=(v^{(1)}, v^{(2)})\ne 0$ such that $v^T M v <0$, 
then the matrix $M$ has a negative eigenvalue. }
%
%
%
%
%
\begin{figure*}[!t]
\normalsize
\setcounter{mytempeqncnt}{\value{equation}} 
\setcounter{equation}{43}
{
\be\label{Hess-1}
\ba{ll}
&
\begin{bmatrix}
D_1 \Gamma(V) [\boldsymbol{\cos}(D_1^T\varphi)] D_1^T &[\boldsymbol{\sin}(D_1^T\varphi)]  \Gamma(V)  |D|^T  [V]^{-1} D_1^T\\[2mm]
D_1 [V]^{-1} |D| \Gamma(V) [\boldsymbol{\sin}(D_1^T\varphi)] & \mathcal{A}(\boldsymbol{\cos}(D_1^T\varphi))+\displaystyle\frac{\partial^2 H}{\partial V^2}
\\[2mm]
\end{bmatrix}
\\[8mm]
=
\begin{bmatrix}
D_1 & \boldsymbol{0}\\
\boldsymbol{0} & I
\end{bmatrix}
&
\begin{bmatrix}
\Gamma(V) [\boldsymbol{\cos}(D_1^T\varphi)] &[\boldsymbol{\sin}(D_1^T\varphi)]  \Gamma(V)  |D|^T  [V]^{-1} \\[2mm]
[V]^{-1} |D| \Gamma(V) [\boldsymbol{\sin}(D_1^T\varphi)] & \mathcal{A}(\boldsymbol{\cos}(D_1^T\varphi))+\displaystyle\frac{\partial^2 H}{\partial V^2}
\\[2mm]
\end{bmatrix}
\begin{bmatrix}
D_1^T & \boldsymbol{0}\\
\boldsymbol{0} & I
\end{bmatrix}
>0.
\ea
\ee
}
\setcounter{equation}{\value{mytempeqncnt}+1}
\hrulefill
\vspace*{4pt}
\end{figure*}

A characterization of the condition $v^T M v <0$, or equivalently the existence of a {\color{black} negative} eigenvalue of the matrix $M$,  is now studied. To this end, it is instrumental to introduce a class of cut-sets of the graph, \cter{as in the following definition:} 
\cter{
\begin{definition}
A cut-set $K\subset E$ is said to have non-incident edges if for each $k\sim \{i,j\}\in K$ and $k\sim \{i',j'\}\in K$, with $k\ne k'$, all the indices $i, j, i', j'$ are different from each other. 
The class of cut-sets with non-incident edges is denoted by $\mathcal{K}$. 
\end{definition}
}
In words, the property amounts to the following: given any two edges in the cut\cter{-set}, the two pairs of end-points associated with the two edges are different from each other. The set of graphs for which these cuts exists is not empty and includes trees, rings and lattices. Complete graphs do not admit this class of cuts. 
\\
The following holds: 
\begin{lem}\label{lemma.cut}
\cter{Let $\overline  V\in \mathbb{R}^n_{>0}$.}
If there exists a cut-set $K$ in the class $\mathcal{K}$ such that, for all $k\sim\{i,j\}\in K$,  
\be\label{instab.cond}
\sin(\overline \eta_k)^2 >  \beta_k(\overline V_i, \overline V_j) \cos(\overline \eta_k),
\ee
where \claudio{$\overline \eta = D_1^T \overline \varphi$ and }
\[\beta_k(\overline V_i, \overline V_j)=\textstyle 2\max\{\frac{(B_{ii}+h_i(\overline  V_i))  \overline V_i}{B_{ij} \overline V_j}, \frac{(B_{jj}+h_j(\overline  V_j))  \overline V_j}{B_{ij} \overline V_i} \},
\]
%
%
and $h_i$ is defined in \eqref{h1}, \eqref{h2}, then the matrix $M$ \cter{at the center of the product} in \eqref{Hess-1} evaluated at $\overline \varphi, \overline V$ has a {\color{black}negative} eigenvalue.  
\end{lem}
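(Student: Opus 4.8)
The plan is to use the reduction stated just before the lemma: since $M$ is symmetric, it suffices to produce a single vector $v=(v^{(1)},v^{(2)})\neq\boldsymbol{0}$, with $v^{(1)}\in\R^m$ indexed by edges and $v^{(2)}\in\R^n$ indexed by nodes, satisfying $v^TMv<0$. I would construct such a $v$ directly from the cut-set $K$. Because $K$ lies in the class $\mathcal{K}$, its edges are pairwise non-incident, so I may select, for each $k\sim\{i,j\}\in K$, exactly one endpoint (say $i$), and the selected nodes are then all distinct. I take $v^{(1)}$ supported on $K$ and $v^{(2)}$ supported on the selected nodes, with all other entries zero. Reading off $v^TMv$ from the central matrix in \eqref{Hess-1}, the diagonal block $\Gamma(V)[\boldsymbol{\cos}(D_1^T\varphi)]$ contributes only the edge terms $\gamma_k\cos(\overline\eta_k)(v^{(1)}_k)^2$, while the off-diagonal blocks couple each $v^{(1)}_k$ solely to $v^{(2)}_i$, the contribution at the unselected endpoint $j$ vanishing. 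This decoupling across edges is exactly where the non-incidence of $K$ is used.

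Next I would complete the square in each edge variable $v^{(1)}_k$. On the relevant range $\cos(\overline\eta_k)>0$ (the complementary case renders the edge's diagonal entry non-positive and a negative eigenvalue immediate), the coefficient $\gamma_k\cos(\overline\eta_k)=\overline V_i\overline V_jB_{ij}\cos(\overline\eta_k)$ is positive, so minimising over $v^{(1)}_k$ eliminates it and leaves, for each selected node $i$, the scalar term $A_i(v^{(2)}_i)^2$ with $A_i=(B_{ii}+h_i(\overline V_i))-\frac{\sin^2(\overline\eta_k)}{\cos(\overline\eta_k)}\frac{\overline V_j}{\overline V_i}B_{ij}$, where I have used $\gamma_k=\overline V_i\overline V_jB_{ij}$. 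The instability condition \eqref{instab.cond}, through the definition of $\beta_k$, is precisely what forces $A_i$ to be negative; the factor $2$ built into $\beta_k$ yields the stronger estimate $\frac{\sin^2(\overline\eta_k)}{\cos(\overline\eta_k)}\frac{\overline V_j}{\overline V_i}B_{ij}>2(B_{ii}+h_i(\overline V_i))$, i.e. $A_i<-(B_{ii}+h_i(\overline V_i))$, and I would retain this extra slack for the final step. The $\max$ in $\beta_k$ ensures the bound holds whichever endpoint of $k$ is selected.

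The remaining contributions to $v^TMv$ come from the off-diagonal entries of $\mathcal{A}(\boldsymbol{\cos}(D_1^T\varphi))+\partial^2H/\partial V^2$ between two selected nodes $i,i'$ that happen to be adjacent in the graph; these are the only surviving cross terms, as any edge touching an unselected node contributes zero. Each such term equals $-2B_{ii'}\cos(\overline\theta_{ii'})v^{(2)}_iv^{(2)}_{i'}$, which by $|\cos|\le1$ and the arithmetic–geometric mean inequality is at most $B_{ii'}\big((v^{(2)}_i)^2+(v^{(2)}_{i'})^2\big)$. Summing these and invoking the loopy-Laplacian property $B_{ii}\ge\sum_{j\in\mathcal{N}_i}B_{ij}$, the total coefficient of $(v^{(2)}_i)^2$ produced by the cross terms is at most $B_{ii}$. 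Hence $v^TMv\le\sum_i(A_i+B_{ii})(v^{(2)}_i)^2$, and the slack from the previous paragraph gives $A_i+B_{ii}<-h_i(\overline V_i)\le0$ for every selected node, so $v^TMv<0$ whenever $v^{(2)}\neq\boldsymbol{0}$. This certifies a negative eigenvalue of $M$, as claimed.

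I expect the main obstacle to be exactly the cross-term control of the third paragraph: endpoints of distinct cut edges may still be adjacent through edges lying outside $K$, so the edge contributions do not decouple perfectly at the node level. It is the combination of the non-incidence of $K$ (which kills all cross terms involving an unselected node), the loopy-Laplacian inequality $B_{ii}\ge\sum_{j\in\mathcal{N}_i}B_{ij}$, and the factor $2$ embedded in $\beta_k$ that allows these residual terms to be absorbed without imposing any additional smallness assumption on the within-side phase differences $\overline\theta_{ii'}$.
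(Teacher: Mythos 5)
Your argument is correct in its essentials but takes a genuinely different route from the paper's. Both proofs reduce the claim to exhibiting a single vector $v=(v^{(1)},v^{(2)})\ne \boldsymbol{0}$ with $v^TMv<0$ and both build $v$ from the cut-set, but the paper activates \emph{both} endpoints of every cut edge, substitutes $v^{(2)}_i=\overline V_i v_k\bar v_k$ and $v^{(2)}_j=\overline V_j v_k\bar v_k$, and collapses the quadratic form into a sum over $k\in K$ of independent scalar quadratics of the form $\beta_k\bar v_k^2+2\sin(\overline\eta_k)\bar v_k+\cos(\overline\eta_k)$ (after discarding the negative term $-2\cos(\overline\eta_k)\bar v_k^2$); condition \eqref{instab.cond} is then read off as the positive-discriminant condition, and the factor $2$ in $\beta_k$ is spent on absorbing the \emph{pair} of diagonal node terms via $(B_{ii}+h_i(\overline V_i))\overline V_i^2+(B_{jj}+h_j(\overline V_j))\overline V_j^2\le\beta_k B_{ij}\overline V_i\overline V_j$. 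You instead activate one endpoint per cut edge, eliminate each edge variable by completing the square, and spend the factor $2$ on the slack $A_i<-(B_{ii}+h_i(\overline V_i))$, which you then use, together with the arithmetic--geometric mean bound and the loopy-Laplacian inequality $B_{ii}\ge\sum_{j\in\mathcal N_i}B_{ij}$, to absorb the cross terms generated by non-cut edges joining two activated nodes. That last step is a genuine addition: the paper's proof restricts the edge sum to $K$ and thereby silently drops exactly those same-side cross terms (which its own choice of $v$, activating both endpoints, also produces), whereas you are the one who controls them explicitly.

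Two caveats. First, your closing inequality $A_i+B_{ii}<-h_i(\overline V_i)\le0$ requires $h_i(\overline V_i)\ge0$; this holds for the conventional droop, quadratic droop and reactive current controllers, but for the E-ARP controller $h_i(\overline V_i)=\overline V_i^{-2}\overline Q_i$ is not sign-definite a priori, a case the paper's bookkeeping handles for free because it never separates $h_i$ from the term absorbed into $\beta_k$. Second, in the boundary case $\cos(\overline\eta_k)=0$ a non-positive diagonal entry alone does not certify a negative eigenvalue; one needs the $2\times2$ principal minor containing the nonzero off-diagonal entry $\sin(\overline\eta_k)\gamma_k(\overline V)\overline V_i^{-1}$ (the paper is equally terse on this point).
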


\begin{proof}
The proof is given in the appendix. 
\end{proof}


The two lemma above lead to the following conclusion:
\begin{proposition}\label{prop.instability}
An equilibrium $(\overline \varphi, \overline \omega, \overline V)$, \cter{with 
$\overline  V\in \mathbb{R}^n_{>0}$,}
is unstable if
there exists a  cut-set $K$ in the class $\mathcal{K}$ such that 
the inequality (\ref{instab.cond}) holds for all $k\sim\{i,j\}\in K$.
\end{proposition}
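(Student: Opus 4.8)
The plan is to chain the two preceding lemmas through the factorization \eqref{Hess-1}. Since the statement asks only for instability, it suffices by Lemma~\ref{inertia} to exhibit a negative eigenvalue of the Hessian $\left.\frac{\partial^2 S}{\partial(\varphi,\omega,V)^2}\right|_-$ displayed in \eqref{Hessian-3*3}. The first observation is that the $\omega$-block of this Hessian equals $K_P^{-1}T_P>0$ and is completely decoupled from the $\varphi$- and $V$-blocks. Hence the Hessian has a negative eigenvalue if and only if its $(\varphi,V)$-block — precisely the matrix on the left-hand side of \eqref{Hess-1} — has one, and the problem reduces to detecting a negative eigenvalue of that block.

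By \eqref{Hess-1} this block factors as $\tilde D\, M\, \tilde D^T$ with $\tilde D=\left[\begin{smallmatrix} D_1 & \boldsymbol 0\\ \boldsymbol 0 & I\end{smallmatrix}\right]$ and $M$ the central matrix evaluated at $(\overline\varphi,\overline V)$. Lemma~\ref{lemma.cut} guarantees, under the cut-set hypothesis \eqref{instab.cond}, that $M$ has a negative eigenvalue, equivalently (by the remark preceding that lemma) that there is a vector $v=(v^{(1)},v^{(2)})$ with $v^T M v<0$. The delicate point is that a negative eigenvalue of $M$ does not by itself survive the congruence by the \emph{wide} matrix $\tilde D$: one must know that the negative direction $v$ lies in $\operatorname{range}(\tilde D^T)=\operatorname{range}(D_1^T)\times\R^n$. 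This is exactly where the cut-set structure enters. The negative direction furnished by a cut-set $K$ is a cut vector: its edge-component $v^{(1)}$ is supported on the edges of $K$ and belongs to the cut space of the graph. Since the interconnection graph is connected, $\mathds{1}_n^T D=0$ forces $D_2=-\mathds{1}_{n-1}^T D_1$, whence $\operatorname{range}(D_1^T)=\operatorname{range}(D^T)$, which is precisely the cut space; thus $v^{(1)}\in\operatorname{range}(D_1^T)$ while $v^{(2)}\in\R^n$ is free, so $v\in\operatorname{range}(\tilde D^T)$.

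The final step is a lifting. Connectivity makes $D_1$ (the reduced incidence matrix) of full row rank $n-1$, so there exists $w^{(1)}$ with $D_1^T w^{(1)}=v^{(1)}$; setting $w=(w^{(1)},v^{(2)})$ gives $\tilde D^T w=v$ and therefore $w^T(\tilde D M\tilde D^T)w=v^T M v<0$. Equivalently, and more transparently, one simply repeats the quadratic-form estimate of Lemma~\ref{lemma.cut} directly on the block $\tilde D M\tilde D^T$ with the test direction $w=(\delta\varphi,\delta V)$ chosen so that $D_1^T\delta\varphi$ is the cut vector supported on $K$, in which case $w^T\tilde D M\tilde D^T w$ equals the same negative quantity. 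Either way, the $(\varphi,V)$-block, and with it the full Hessian in \eqref{Hessian-3*3}, has a negative eigenvalue, and Lemma~\ref{inertia} delivers instability of $(\overline\varphi,\overline\omega,\overline V)$.

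The main obstacle, and the only step beyond a routine concatenation of the two lemmas, is this transfer of the negative direction through the non-square congruence $\tilde D(\cdot)\tilde D^T$: because $\tilde D$ is wide, inertia is not automatically preserved, and the argument genuinely relies on the negative direction being a cut vector — so that it lies in $\operatorname{range}(D_1^T)$ — together with the full row rank of $D_1$ guaranteed by connectivity. This is precisely why the hypothesis restricts attention to cut-sets in the class $\mathcal{K}$ rather than to an arbitrary violation of positive definiteness of $M$.
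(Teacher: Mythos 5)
Your proof is correct and follows essentially the same route as the paper's: invoke Lemma~\ref{lemma.cut} to get a negative direction $v$ for $M$, observe that $v^{(1)}$ lies in the cut space $=\operatorname{range}(D_1^T)$ so the direction survives the congruence in \eqref{Hess-1} and yields $w$ with $w^T\left.\frac{\partial^2 S}{\partial(\varphi,\omega,V)^2}\right|_- w<0$, then conclude via Lemma~\ref{inertia}. Your write-up is simply more explicit than the paper's about why the wide congruence is the delicate step and how connectivity enables the lifting.
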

\begin{proof}
The proof is given in the appendix. 
\end{proof}

 From the relation above, we see that for equilibria for which the components of $\overline V$ have comparable values, inequality  
(\ref{instab.cond}) is likely to be fulfilled as $\eta_k$ diverges from $0$, thus showing that equilibria with large relative phase angles are likely to be unstable.   

\begin{remark}
{\bf (Plastic coupling strength)}
It is interesting to establish a connection with existing studies on oscillator synchronization arising in different contexts. Once again, this connection leverages the use of the energy function. If the coupling between any pair of nodes $i,j$ is represented by a single variable $v_{ij}$, modeling e.g.~a dynamic coupling,  instead of the product of the voltage variables $V_i V_j$, then a different model arises. To obtain this, we focus for the sake of simplicity on oscillators without inertia, and replace the previous energy  function (\ref{e:U-compact}) with
\[
U(\theta,v) = -\frac{1}{2}\sum_{i=1}^n \sum_{j\in \mathcal{N}_i} v_{ij} B_{ij} \cos(\theta_j-\theta_i)+\frac{1}{2}\sum_{\{i,j\}\in E} v_{ij}^2.
\]
Then
\[
\frac{\partial U}{\partial v_{ij}} = -B_{ij} \cos(\theta_j-\theta_i) + v_{ij},
\]
and the resulting (gradient) system becomes 
\[\ba{rcl}
\dot \theta_i &=& \sum_{j\in \mathcal{N}_i} v_{ij} B_{ij}   \sin(\theta_j-\theta_i),\quad i=1,2,\ldots,n\\
\dot v_{ij} &=& \cter{B_{ij} \cos(\theta_j-\theta_i) - v_{ij}},\quad \{i,j\}\in E, 
\ea\]
which arises in oscillator networks with so-called plastic coupling strength (\cite{scardovi.cdc10,gushchin.et.al.ita15,mallada.physics}) and in the context of flocking with state dependent sensing
(\cite{scardovi.cdc10, krishna, sepulchre}). Although 
stability
analysis of equilibria have been carried out for these systems,  the investigation  of the methods proposed in this paper in those contexts is still unexplored and deserves attention. 
\end{remark}

\vspace*{-0.5cm} 
\section{\claudio{Frequency control with  power sharing}}\label{sec.attractivity}
%
%

In this section, we establish the attractivity of a synchronous solution, which amounts to the frequency regulation ($\overline \omega=\omega^*$) with  optimal properties. Moreover, we investigate voltage stability and reactive power sharing in the aforementioned voltage controllers. 

Recall {from \eqref{feasibility}} that for a synchronous solution we have  
\be\label{syncw-ss}
0=-K_P(D\Gamma(\overline V)\boldsymbol{\sin}(D^T\theta^0)-P^\ast)+\overline u_P.
\ee
Among all possible vectors $\overline u_P$ satisfying the above, we look for the one that minimizes the quadratic cost function 
\be\label{e:quadratic}
C(\overline u_P)=\frac{1}{2}\overline u_P^TK_P^{-1} \overline u_P.
\ee
This choice is explicitly computed as {\color{black}\cite{andreasson.acc14, dorfler.et.al.arxiv14, ST:MB:CDP:AUT15}}
\be\label{optimal.u_P}
\overline u_P =- \mathds{1}\frac{\mathds{1}^TP^\ast}{\mathds{1}^T K_P^{-1} \mathds{1}}\,.
\ee
\claudio{Note that in \eqref{e:quadratic} 
any positive diagonal matrix, say $\Sigma$,  could be used instead of $K_P^{-1}$. However, the choice $\Sigma=K_P^{-1}$ yields more compact expressions, and 
results in proportional sharing of the active power according the droop coefficients $k_{P,i}$, see Subsection \ref{subset.power.sharing}.}  

{\color{black} Replacing $\overline u_P$ in \eqref{feasibility} with its expression in  \eqref{optimal.u_P}, and replacing $\overline Q$ with its explicit definition via the loopy Laplacian, the feasibility condition \eqref{feasibility} can be restated as follows:} 

\medskip{}
\begin{assum}\label{assum.feasibility}
There exist constant vectors $\overline V\in \R^n$ and $\theta^0\in \mathbb{T}^n$ such that 
\be\label{feas-opt-1}
D\Gamma(\overline V) \boldsymbol{\sin}(D^T \theta^0) = 
(I- K_P^{-1}  \frac{\mathds{1}\mathds{1}^T}{\mathds{1}^T K_P^{-1} \mathds{1}}) P^*
\ee
and 
\be\label{feas-opt-2}
0=f(\overline V, [\overline V] \mathcal{A}(\boldsymbol{\cos}(D^T \theta^0)) \overline V, \overline u_Q).
\ee
\end{assum}
\medskip{}

\begin{remark}
Similar to \cite[Remark 5]{ST:MB:CDP:AUT15} it can be shown that if the assumption above is satisfied then necessarily $\overline V\in \mathbb{R}^n_{>0}$. Furthermore, in case the network is a tree, it is easy to observe that \eqref{feas-opt-1} is satisfied if 
and only if there exists $\overline V\in \mathbb{R}^n_{>0}$ such that
\[
\|\Gamma(\overline V)^{-1} D^\dag (I- K_P^{-1}  \frac{\mathds{1}\mathds{1}^T}{\mathds{1}^T K_P^{-1} \mathds{1}}) P^*\|_\infty <1,
\]
with $D^\dag$ denoting the left inverse of $D$. {In the case of the quadratic voltage droop and \cter{reactive current} controllers, explicit expressions of the voltage vector $\overline V$ can be given (see Subsection \ref{subset.power.sharing}), in which case the condition above becomes dependent on the voltage phase vector 
$\theta^0$ only. }   
\end{remark}
 
To achieve the optimal input \eqref{optimal.u_P}, we 
consider the following 
\claudio{distributed} active power controller (\cite{simpson, dorfler.et.al.arxiv14, mb:cdp:st:mtns14}) 
{
\be\label{active.controller}
\ba{rcl}
\dot \xi &=& -L_P \xi+ K_P^{-1}(\omega^*-\omega)\\
u_P &=&  \xi
\ea
\ee}
where the matrix $L_P$ is the Laplacian matrix of an undirected and connected communication graph. 
\claudio{Here, the term $\omega^\ast-\omega$ attempts to regulate the frequency to the nominal one whereas the consensus based algorithm $-L_P\xi$ steers the input to the optimal one given by  \eqref{optimal.u_P} at steady-state.}
For the choice of the voltage/reactive power control $u_Q$, we set $u_Q=\overline u_Q$ where $\overline u_Q$ is a constant vector enforcing the setpoint for the voltage dynamics. The role of this setpoint will be made clear in Subsection \ref{subset.power.sharing}.
Then, the main result of this section is as follows:

\begin{thm}\label{t:convergance}
\color{black}{Suppose that the vectors \cter{$\theta^0\in \mathbb{T}^n$ and $\overline V\in \R^n$} are such that Assumption \ref{assum.feasibility} and condition \eqref{sconvex}, with $\overline \omega=\omega^\ast$,  hold. Let $u_P$ be given by \eqref{active.controller}, $u_Q=\overline u_Q  \in \R^n $, 
and $\overline u_P$  the optimal input \eqref{optimal.u_P}.
Then, the following statements hold:
\begin{enumerate}[(i)]
\item  The vector $(D^T\theta, \omega, V, \xi)$ with $(\theta, \omega, V, \xi)$ being a solution to \eqref{micro.grid}, \eqref{active.controller}, with the conventional droop controller \eqref{f-droop}, quadratic droop controller \eqref{f-Qdroop}, or reactive current controller \eqref{f-current}, locally\footnote{``locally" refers to the fact that the solutions are initlialized in a suitable neighborhood of $(\overline\theta, \overline\omega, \overline V, \overline\xi)$.}  converges to  the point $(D^T\theta^0, \overline\omega, \overline V, \overline\xi)$.
\medskip{}
\item  The vector $(D^T\theta, \omega, V, \xi)$ 
with $(\theta, \omega, V, \xi)$ being a solution to \eqref{micro.grid}, \eqref{active.controller}, with \cter{the} E-ARP controller \eqref{f-consensus}, locally converges to a point in the set
\begin{align*}
\{(D^T\theta, \omega, V, \xi) \mid \;& \omega=\omega^\ast, \xi=\overline u_P,\\ &P=\overline P, L_QK_Q Q=K_Q^{-1}\overline u_Q 
\}
\end{align*}
Moreover, {for all $t\ge 0$,  }
\[
{\mathds{1}^TK_Q^{-1}\; \boldsymbol{\ln}(V(t))=}
\mathds{1}^TK_Q^{-1}\; \boldsymbol{\ln}(V(0)).
\]
\end{enumerate}}
\end{thm}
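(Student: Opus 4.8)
The plan is a Lyapunov/LaSalle argument built on the Bregman storage function $\calS$ certified as incrementally cyclo-dissipative in Theorem~\ref{t:droop}, augmented by an incremental term for the secondary controller \eqref{active.controller}. First I would pin down the controller equilibrium. Since $u_P=\xi$ and at a synchronous solution $\omega=\overline\omega=\omega^\ast$, the steady-state relation $0=-L_P\overline\xi+K_P^{-1}(\omega^\ast-\overline\omega)$ forces $\overline\xi\in\ker L_P=\mathrm{span}\{\mathds{1}\}$, and matching $u_P=\xi$ to the optimal value identifies $\overline\xi=\overline u_P$ as in \eqref{optimal.u_P}. I would then work in the reduced coordinates $(\varphi,\omega,V,\xi)$ (the closed loop is well defined there, since $\dot\varphi_i=\omega_i-\omega_n$) and propose the composite function
\[
\calV=\calS+\tfrac{1}{2}(\xi-\overline\xi)^T(\xi-\overline\xi),\qquad \overline\xi=\overline u_P .
\]

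Next I would differentiate $\calV$ along solutions. Setting $u_P=\xi$ and $u_Q=\overline u_Q$ in the dissipation equalities \eqref{dS-droop}, \eqref{dU-Qdroop}, \eqref{dU-current}, \eqref{dU-consensus} annihilates the voltage-input term and leaves $\dot\calS=-(\omega-\overline\omega)^TK_P^{-1}(\omega-\overline\omega)+(\omega-\overline\omega)^TK_P^{-1}(\xi-\overline\xi)-\mathcal{D}_V$, where $\mathcal{D}_V\ge 0$ is the case-dependent voltage-dissipation term. Using $L_P\overline\xi=0$ I get $\tfrac{d}{dt}\tfrac12\|\xi-\overline\xi\|^2=-(\xi-\overline\xi)^TL_P(\xi-\overline\xi)-(\xi-\overline\xi)^TK_P^{-1}(\omega-\overline\omega)$; since $K_P^{-1}$ is symmetric the two cross terms are equal scalars and cancel, so
\[
\dot\calV=-(\omega-\overline\omega)^TK_P^{-1}(\omega-\overline\omega)-\mathcal{D}_V-(\xi-\overline\xi)^TL_P(\xi-\overline\xi)\le 0 .
\]

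I would then argue that $\calV$ is a genuine local Lyapunov function. By \eqref{shift-co} the storage function depends on $\theta$ only through $\varphi$, and under condition \eqref{sconvex} Proposition~\ref{p1} renders the Hessian positive definite at $(\overline\varphi,\overline\omega,\overline V)$; hence $\calS$, and therefore $\calV$, has an isolated strict minimum at $(\overline\varphi,\overline\omega,\overline V,\overline\xi)$. Restricting to a small compact sublevel set of $\calV$ around this point supplies both the forward-invariant set required by Assumption~\ref{forward.invariance} and the compactness needed for LaSalle. On the largest invariant set in $\{\dot\calV=0\}$ we have $\omega=\omega^\ast$, $\xi-\overline\xi\in\ker L_P=\mathrm{span}\{\mathds{1}\}$, and $\mathcal{D}_V=0$. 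For cases (1)--(3), $\mathcal{D}_V$ is a positive-definite form in $\partial\calS/\partial V$, so $\mathcal{D}_V=0$ forces $\partial\calS/\partial V=0$; invariance ($\dot\omega=0$) together with $T_P\dot\omega=-K_P(P-\overline P)+(\xi-\overline\xi)$ gives $K_P(P-\overline P)=\xi-\overline\xi=\mathds{1}c$, and since $\mathds{1}^TD=0$ implies $\mathds{1}^T(P-\overline P)=0$, multiplying by $\mathds{1}^TK_P^{-1}$ yields $c=0$, whence $P=\overline P$ and $\xi=\overline u_P$. The conditions $P=\overline P$, $\omega=\overline\omega$, $\partial\calS/\partial V=0$ single out a critical point of $\calS$, which by strict convexity is locally unique; this proves (i).

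For the E-ARP controller the term $\mathcal{D}_V=(\partial\calS/\partial V)^T[V]K_QL_QK_Q[V](\partial\calS/\partial V)$ is only positive semidefinite, so $\mathcal{D}_V=0$ forces merely $K_Q[V]\,\partial\calS/\partial V\in\ker L_Q$; using \eqref{dSdV--consensus} this reads $K_Q(Q-\overline Q)\in\mathrm{span}\{\mathds{1}\}$, whence $L_QK_Q Q=L_QK_Q\overline Q=K_Q^{-1}\overline u_Q$ by \eqref{baruQ-consensus}, while the frequency/active-power argument above still yields $\omega=\omega^\ast$, $\xi=\overline u_P$, $P=\overline P$; together these give exactly the claimed set, proving (ii). The stated conservation law follows by direct computation from \eqref{e-arp}: with $u_Q=\overline u_Q=K_QL_QK_Q\overline Q$ one has $\tfrac{d}{dt}\mathds{1}^TK_Q^{-1}\boldsymbol{\ln}(V)=\mathds{1}^TK_Q^{-1}\dot\chi=-\mathds{1}^TL_QK_Q(Q-\overline Q)=0$, since $L_Q\mathds{1}=0$. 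I expect the main obstacle to be the rotational invariance and the collapse of the LaSalle set in (i): one must pass to the $\varphi$ coordinates to obtain a \emph{strict} minimum, and then combine the conservation $\mathds{1}^T(P-\overline P)=0$ with strict local convexity to rule out a continuum of nearby equilibria and isolate the target point.
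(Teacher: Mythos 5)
Your argument follows the paper's route almost step for step: the composite function $\calS+\tfrac12\|\xi-\overline\xi\|^2$ in the $\varphi$ coordinates, the cancellation of the cross terms $(\omega-\overline\omega)^TK_P^{-1}(\xi-\overline\xi)$, the compact sublevel set around the strict minimum guaranteeing forward invariance and $V(t)\in\R^n_{>0}$, LaSalle, and the collapse of the invariant set via $\mathds{1}^TD=0$ and local strict convexity. Part (i) is correct and essentially identical to the paper's proof.

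There is, however, a genuine gap in part (ii). The theorem asserts convergence to a \emph{point} of the set $\mathcal{Q}=\{\omega=\omega^\ast,\ \xi=\overline u_P,\ P=\overline P,\ L_QK_QQ=K_Q^{-1}\overline u_Q\}$, and $\mathcal{Q}$ is not a singleton: because $X(V)=[V]K_QL_QK_Q[V]$ is only positive semidefinite, the defining conditions leave a continuum of admissible $(\varphi,V)$ (only the direction $\mathds{1}^TK_Q^{-1}\boldsymbol{\ln}V$ is pinned down by the conservation law). LaSalle yields convergence to the largest invariant subset of the zero-dissipation set, hence to $\mathcal{Q}$, but it does not exclude an $\omega$-limit set that is a nontrivial connected subset of $\mathcal{Q}$; your closing sentence ``together these give exactly the claimed set, proving (ii)'' therefore does not prove the claim as stated. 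The paper closes this by shrinking the level set $\Omega$ so that the Hessian of $\calS+\mathcal{C}$ is positive definite at every point of $\mathcal{Q}\cap\Omega$ (possible by \eqref{sconvex} and continuity), observing that the Bregman storage function can be recentred at any such point to show that each point of $\mathcal{Q}$ is a Lyapunov stable equilibrium of the closed loop written in gradient/port-Hamiltonian form, and then invoking \cite[Proposition 4.7]{haddad2008nonlinear}: a positive limit set containing a Lyapunov stable equilibrium is a singleton. You need to add this (or an equivalent) argument. The conservation of $\mathds{1}^TK_Q^{-1}\boldsymbol{\ln}V$ you establish correctly, as is the derivation of $L_QK_QQ=K_Q^{-1}\overline u_Q$ from $\ker L_Q=\mathrm{span}\{\mathds{1}\}$ and \eqref{dSdV--consensus}.
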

\bigskip{} 
\begin{proof}
\New{First recall that $\varphi = E^T \theta$, $\overline \varphi=E^T \overline \theta$, and $D_1^T\overline \varphi=D^T\overline \theta=D^T\theta^0$ with $E^T= [I_{n-1}\; -\mathds{1}_{n-1}]$ and noting that $ED_1=D$.} By the compatibility property of the induced matrix norms, we have $\|\varphi(0)-\overline \varphi\|\le \|E^T \| \|\theta(0)-\overline \theta(0)\|$, thus showing that a choice of $\theta(0)$ sufficiently close to $\theta^0=\theta(0)$, returns an initial condition $\varphi(0)$ sufficiently close to $\overline \varphi$. 
We then consider a solution $(\theta,\omega,V,\xi)$ to the closed-loop system and express the solution into the new coordinates as $(\varphi,\omega,V,\xi)$.
Define the incremental storage function 
\be\label{inc-Lyap-active-controller}
\mathcal{C}(\xi)=\frac{1}{2}(\xi-\overline \xi)^T(\xi-\overline \xi).
\ee
Notice that $\overline \xi \in \im \mathds{1}$. Then
\[\ba{l}
\displaystyle\frac{d}{dt}\mathcal{C} =-(\xi-\overline \xi)^TL_P(\xi-\overline \xi)-(\xi-\overline \xi)^T  K_P^{-1}  (\omega-\overline \omega)\\
\quad =-(\xi-\overline \xi)^TL_P(\xi-\overline \xi)-
(u_P-\overline u_P)^T K_P^{-1}(\omega-\overline \omega).
\ea\] 

By \eqref{shift-co}, the time derivative of $\calS(\theta, \omega, V)$ is equal to that of $\calS(\varphi, \omega, V)$, with $\varphi$ obtained from \eqref{varphi}, namely (with \cter{\eqref{shift-co} in mind)}
\be\label{Nima-dt-equality}
\ba{rcl}
\displaystyle\frac{d}{dt}\calS(\theta, \omega, V) 
&=& \displaystyle\frac{d}{dt}\calS(\varphi, \omega, V).
\ea
\ee
Hence, from the proof of Theorem \ref{t:droop} we infer that 
\be\label{key.inequality}
\ba{rcl}
&& \displaystyle\frac{d}{dt}\calS(\varphi, \omega, V) \\[2mm]
&=& 
-( \omega-\overline \omega)^T  K_P^{-1} (\omega -\overline \omega) + ( \omega-\overline \omega)^T  K_P^{-1}(u_P-\overline u_P)
\\[2mm]
&&
-(\displaystyle\frac{\partial \calS}{\partial V})^T X(V) \displaystyle\frac{\partial \calS}{\partial V}+ (\displaystyle\frac{\partial \calS}{\partial V})^T Y(V)  (u_Q-\bar u_Q),
\ea
\ee
where $X(V)= T_Q^{-1} K_Q[V], T_Q^{-1}\textrm{ or } [V] K_Q L_Q  K_Q[V]$ and $Y(V)= T_Q^{-1}, T_Q^{-1} [V], [V]$ depending on the voltage controller adopted.

Observe that, by setting $u_Q=\overline u_Q$ and bearing in mind (\ref{key.inequality}), the equalities \eqref{dS-droop}, 
\eqref{dU-Qdroop}, \eqref{dU-current}, and \eqref{dU-consensus} can be written in a unified manner as 
\[\ba{l}
\displaystyle\frac{d}{dt}\calS(\varphi, \omega, V)=-(\omega-\overline \omega)^TK_P^{-1}(\omega-\overline \omega)\\
\qquad -\left(\fpart{\calS}{V}\right)^TX(V)\fpart{\calS}{V}+(\omega-\overline \omega)^TK_P^{-1}(u_P-\overline u_P)
\ea\]
where $X$ is a positive (semi)-definite matrix {\color{black} defined above}.  
Now taking $\calS+\mathcal{C}$ as the Lyapunov function, we have
\be\label{total.storage.dot}\ba{c}
\displaystyle \frac{d}{dt}\calS+\frac{d}{dt}\mathcal{C}=-(\omega-\overline \omega)^TK_P^{-1}(\omega-\overline \omega)\\[2mm]
-\left(\fpart{\calS}{V}\right)^TX(V)\fpart{\calS}{V}
-(\xi-\overline \xi)^TL_P(\xi-\overline \xi).
\ea\ee
By local strict convexity of $\calS+\mathcal{C}$ (thanks to \eqref{sconvex}), we can construct a forward invariant compact level set $\Omega$ around $(\overline \varphi, \overline \omega, \overline V,\overline \xi)$
and apply  LaSalle's invariance principle. Notice in particular that on this forward invariant set $V(t)\in \R^n_{>0}$ for all $t\ge 0$. Then the solutions are guaranteed to converge to the largest invariant set where
\be\label{conv.inv.set}
\ba{rcl}
\omega&=&\overline \omega\\[2mm]
0&=&L_P(\xi-\overline \xi)\\[2mm]
0&=&\left(\fpart{\calS}{V}\right)^TX(V)\fpart{\calS}{V}
\ea
\ee
\New{The first equality yields $\fpart{\mathcal{S}}{\omega}=0$ on the invariant set.} Recall that $\overline \xi\in \im \mathds{1}$. Hence, on the invariant set, $L_P\xi=0$ and thus $\xi=\gamma \mathds{1}$ for some $\gamma \in \R$. Note that, by \eqref{active.controller},  $\gamma$ has to be constant given the fact that $\omega=\omega^\ast$ and $L_P\xi=0$.
Also note that 
\[
u_P=K_P(D\Gamma(V)\boldsymbol{\sin} (D^T\theta^0)-P^\ast)
\]
on the invariant set. Multiplying both sides of the above equality by $\mathds{1}^TK_P^{-1}$ yields
 $\gamma \mathds{1}^T K_P^{-1} \mathds{1}=-\mathds{1}^TP^\ast$. 
Therefore,  $\xi=-\frac{\mathds{1}\mathds{1}^TP^\ast}{\mathds{1}^T K_P^{-1} \mathds{1}}$,  and on the invariant set, $u_P$ is equal to the optimal input $\overline u_P$ given by \eqref{optimal.u_P}. \color{black}{This also means that $\fpart{\mathcal{C}}{\xi}=0$.

Notice that any solution $(\varphi, \omega, V, \xi)$ on the invariant set satisfies
\[
0 =-T_{P}^{-1} K_P 
E\displaystyle\frac{\partial \mathcal{S}}{\partial \varphi}
 - T_P^{-2} K_P \frac{\partial \mathcal{S}}{\partial \omega} + T_{P}^{-1} \displaystyle\frac{\partial \mathcal{C}}{\partial \xi}.\\[2mm]
\]
Hence, evaluating the dynamics above on the invariant set yields 
$\frac{\partial \mathcal{S}}{\partial \varphi}=0$
noting that the matrix $E$ has full column rank.

Furthermore, by \eqref{dS-droop}, \eqref{dU-Qdroop}, and \eqref{dU-current}, the matrix $X(V)$ is positive definite for the droop controller, quadratic droop controller, and the reactive current controller. Hence, the third equality in \eqref{conv.inv.set} yields $
\frac{\partial \mathcal{S}}{\partial V}=0$
on the invariant set.

Therefore, the partial derivatives of $\mathcal{S}+\mathcal{C}$ vanish on the invariant set. Now, as the solution is evolving in a neighborhood where there is only one isolated minimum  $(\overline \varphi,\overline \omega,  \overline V, \overline \xi)$ of $\mathcal{S}+\mathcal{C}$, then the  invariant set only comprises such a minimum, and therefore convergence to the latter is guaranteed. This verifies the first statement of the theorem noting that the convergence of $\varphi$ to $\overline \varphi$ implies that of $D^T\theta$ to $D^T\theta^0$ by continuity and the equality $ED_1=D$.}

For the E-ARP controller, we have $X(V)=[V]K_Q L_Q K_Q [V]$ as evident from \eqref{dU-consensus}. Hence, by \eqref{dSdV--consensus} and the third equality in \eqref{conv.inv.set}, {on the invariant set} we obtain that 
\be\label{Q&barQ}
L_Q K_Q Q=L_Q K_Q\overline Q.
\ee
By \eqref{baruQ-consensus} and \eqref{Q&barQ}, the vector ${Q}$ satisfies on the invariant set
\be\label{tilde.Q.1}
L_Q K_Q{Q}=K_Q^{-1}\overline u_Q.
\ee
Notice that, for the E-ARP controller, we have so far shown that the solutions $(\varphi, \omega, V, \xi)$ converge to the set 
\begin{align*}
\mathcal{Q}:=\{(\varphi, \omega, V, \xi) \in \Omega &\mid \;\omega=\omega^\ast, \xi=\overline u_P,\\
 &P=\overline P, L_QK_Q Q=K_Q^{-1}\overline u_Q 
\}.
\end{align*} 
Next, we establish the convergence of trajectories to {\em a point} in $\mathcal{Q}$. To this end, we take the forward invariant set $\Omega$ sufficiently small such that  
\be\label{hessian10}
\frac{\partial^2 (\mathcal{S}+\mathcal{C})}{\partial (\varphi, \omega, V,\xi)^2}>0
\ee
for every $(\varphi,  \omega,  V,  \xi)\in \mathcal{Q}$. 
Note that this is always possible by \eqref{sconvex}  and continuity.
Observe that any solution $(\varphi, \omega, V, \xi)$ satisfies
\[\ba{rcl}
\dot \varphi &=& 
E^T K_{P}T_{P}^{-1} 
\displaystyle\frac{\partial \mathcal{S}}{\partial \omega}\\[2mm]
\dot \omega &=&-T_{P}^{-1} K_P 
E\displaystyle\frac{\partial \mathcal{S}}{\partial \varphi}
 - T_P^{-2} K_P \frac{\partial \mathcal{S}}{\partial \omega} + T_{P}^{-1} \displaystyle\frac{\partial \mathcal{C}}{\partial \xi}\\[2mm]
\dot V &=&- T_Q^{-1}X(V)\displaystyle\frac{\partial \mathcal{S}}{\partial V}\\
\dot \xi &=& -L_C \displaystyle\frac{\partial \mathcal{C}}{\partial \xi} - K_{P}T_{P}^{-1} 
\displaystyle\frac{\partial \mathcal{S}}{\partial \omega}.
\ea
\]
It is easy to see that every point of $\mathcal{Q}$ is an equilibrium of the system above, and by \eqref{hessian10} is  Lyapunov stable. In fact, by \eqref{hessian10}, the incremental storage function $\mathcal{S}+\mathcal{C}$ can be analogously defined with respect to any point in $\mathcal{Q}$ to establish Lyapunov stability by the inequality $\dot{\calS}+\dot{\mathcal{C}}\le 0$. Therefore, the positive limit set associated with any solution issuing from a point in $\Omega$ contains a Lyapunov stable equilibrium. It then follows by \cite[Proposition 4.7]{haddad2008nonlinear}\footnote{For the corrected version, see the errata and addenda in {http://soliton.ae.gatech.edu/people/whaddad/journal/Errata.pdf}} that this positive limit set is a singleton which proves the convergence to a point in $\mathcal{Q}$. This proves the claim in the second statement of the theorem given the relationship between $\theta$ and $\varphi$ variables exploited before. 

Finally, by \eqref{baruQ-consensus}, the E-ARP controller  can be written as
\[
\dot{V}=-[V]K_QL_Q K_Q  (Q-\overline Q). 
\] 
Hence, we have
\[
\frac{d}{dt}(\mathds{1}^T K_Q^{-1}\;\boldsymbol{\ln}V)=\mathds{1}^T K_Q^{-1} [V]^{-1}[V]K_Q L_Q K_Q (Q-\overline Q)=0,
\]
as $\mathds{1}^T L_Q = 0$, 
which proves that $\mathds{1}^T K_Q^{-1}\;\boldsymbol{\ln}(V)$ is a conserved quantity. 
\end{proof}
\medskip{}
{\color{black}
\begin{remark}
{\bf (Stability under feedforward control)}  When the input $u_P$ is set to the optimal feedforward input $\overline u_P$, rather than being generated by the feedback controller \eqref{active.controller}, the closed-loop system takes the form 
\[\ba{rcl}
\dot \varphi &=& 
E^T K_{P}T_{P}^{-1} 
\displaystyle\frac{\partial \mathcal{S}}{\partial \omega}\\[2mm]
\dot \omega &=&-T_{P}^{-1} K_P 
E\displaystyle\frac{\partial \mathcal{S}}{\partial \varphi}
 - T_P^{-2} K_P \frac{\partial \mathcal{S}}{\partial \omega}\\[2mm]
\dot V &=&- T_Q^{-1}X(V)\displaystyle\frac{\partial \mathcal{S}}{\partial V}.
\ea
\]
The same arguments as in the proof  above then show that solutions to this closed-loop system locally converge to the equilibrium point $(\overline \varphi,\overline \omega,  \overline V)$. Hence, the stability of this equilibrium is an intrinsic property of the closed-loop system obtained setting $u_P=\overline u_P$, $u_Q=\overline u_Q$. The adoption of the distributed integral controller \eqref{active.controller} is required to overcome the lack of knowledge of $\overline u_P$, which depends on global parameters. 
\end{remark}
}
\subsection{Power sharing }\label{subset.power.sharing} 
Theorem \ref{t:convergance} portrays the asymptotic behavior of the microgrid models discussed in this paper, {\color{black} namely} {\em frequency regulation} and {\em voltage stability}. {\color{black} In addition,} {\em optimal active power sharing} for the coupled nonlinear microgrid model \eqref{micro.grid} {\color{black} is achieved if the droop coefficients $K_P$ are suitably chosen. In fact,} 
substituting \eqref{optimal.u_P} into \eqref{syncw-ss} {\color{black} yields}
\[
\overline P =  P^* - K_P^{-1}  \mathds{1}\frac{\mathds{1}^TP^\ast}{\mathds{1}^T K_P^{-1} \mathds{1}},
\]
or, component-wise,  
\[
\overline P_i =  P_i^* - (k_{P})_i^{-1}  \frac{\mathds{1}^TP^\ast}{\mathds{1}^T K_P^{-1} \mathds{1}},
\]
where $K_P=[k_P]$. In  case  droop coefficients are selected proportionally (\cite{simpson,dorfler.et.al.arxiv14, 
andreasson.acc14,mb:cdp:st:mtns14, ST:MB:CDP:AUT15}), i.e.
\[
(k_{P})_i P_i^*  = (k_{P})_j P_j^*,
\] 
for all $i,j$, we conclude that 
\[
(k_{P})_i \overline P_i  = (k_{P})_j \overline P_j,
\]
which accounts for the desired proportional active power sharing based on the diagonal elements of $K_P$ as expected.

Next, we take a closer look at other consequences and implications of Theorem \ref{t:convergance} for different voltage dynamics.
\\[2mm]
\noindent 1)\;{\em Conventional droop controller}\/: 
The vectors of voltages and reactive powers converge to $\overline V$ and $\overline Q$ satisfying
\be\label{e:rev-droop-sharing}
K_Q \overline Q + \overline V= \overline u_Q  
\ee
which yields
\be\label{e:rev-droop-sharing2}
\frac{(k_Q)_i\overline Q_i+\overline V_i}{(k_Q)_j\overline Q_j+\overline V_j}=\frac{(\overline u_Q)_i}{(\overline u_Q)_j}.
\ee
\claudio{This results in  partial voltage regulation and reactive power sharing for the droop controlled inverters.
In fact, for small values of $k_Q$, $\overline u_Q$ regulates the voltages following \eqref{e:rev-droop-sharing}. On the other hand, if the elements of $k_Q$ are sufficiently large, reactive power is shared according to the elements of $\overline u_Q$ as given by \eqref{e:rev-droop-sharing2}.
\claudio{This tunable tradeoff between voltage regulation and reactive power sharing is consistent with the findings of \cite{simpson.industrial}.} 
}
\\[2mm]
\noindent 2)\;{\em Quadratic droop controller}\/: The vector of voltages and reactive power converge to $\overline V$ and $\overline Q$ with
\[
K_Q[\overline V]^{-1}\overline Q + \overline V=\overline u_Q. 
\]
This implies that
\[
\frac{(k_Q)_i\overline Q_i+\overline V_i^2}{(k_Q)_j\overline Q_j+\overline V_j^2}=\frac{(\overline u_Q)_i}{(\overline u_Q)_j}
\]
which again results in a partial \claudio{voltage regulation and reactive power sharing by an appropriate choice of $k_Q$ and $\overline u_Q$.}
Moreover, in this case, the voltage variables at steady-state are explicitly given by
\[
\overline V=(I+K_Q\mathcal{A} (\boldsymbol{\cos}(D^T\theta^0)))^{-1}  \overline u_Q.
\]
\noindent 3)\;{\em Reactive current controller}:\/ In this case, we have
\[
[\overline V]^{-1}\overline Q=\overline u_Q
\]
which results in
\[
\frac{\;\;\frac{\overline Q_i}{\overline V_i}\;\;}{\frac{\overline Q_j}{\overline V_j}}=\frac{(\overline u_Q)_i}{(\overline u_Q)_j}=
(\frac{\overline V_j}{\overline V_i})\;(\frac{\overline Q_i}{\overline Q_j}).
\]
The first equality provides the exact reactive current sharing, whereas the second equality can be interpreted as a mixed voltage and reactive power sharing condition.  
Moreover, the voltage variables at steady-state are given by
\[
\overline V=\mathcal{A}^{-1}(\boldsymbol{\cos}(D^T\theta^0))\overline u_Q.
\]
\\[2mm]
\noindent 4)\;{\em Exponentially-scaled averaging reactive power controller}\/:
%
In this case, 
the exact reactive power sharing can be achieved as evident from the \new{second} statement of Theorem \ref{t:convergance}, \new{with $\overline u_Q=0$.} In particular, by equality (\ref{tilde.Q.1}) with $\overline u_Q=0$ we obtain that 
%
\[
{Q}=\alpha K_Q^{-1}\mathds{1}
\]
for some $\alpha \in \R$. 
Multiplying both sides of the above equality by $\mathds{1} ^T$ yields
\[
\alpha=\frac{\mathds{1}^T Q}{\mathds{1}^T K_Q^{-1} \mathds{1}}.
\]
Clearly, $\alpha >0$, by definition of $Q$ and  as the matrix $\mathcal{A}$ is positive definite.
Therefore, as a consequence of Theorem \ref{t:convergance}, the vector of reactive power converges to a constant vector $\widetilde{Q}\in \R^n_{>0}$ where
\be\label{reactive-sharing}
\new{(k_Q)_i \widetilde{Q}_i = (k_Q)_j \widetilde{Q}_j,} 
\ee
which guarantees {proportional} reactive power sharing according to the elements of $k_Q$ as desired. 
Notice that the quantity \new{$\mathds{1}^T K_Q^{-1}\;\boldsymbol{\ln}V$} is a conserved quantity in this case. Hence, the point  of convergence for the voltage variables is primarily determined by the initialization $V(0)$.

\subsection{Lossy lines}\label{subs.lossy}

Under appropriate conditions, 
{\color{black}the stability of the system dynamics under the various controllers}
are preserved in the presence of lossy transmission lines that are homogeneous, namely whose impedences $Z_{ij}$ equal $|Z_{ij}|{\rm e}^{\sqrt{-1} \phi}$, with $\phi \in [0, \frac{\pi}{2}]$. Consistently, 
shunt components at the buses that are a series interconnection of a resistor and an inductor whose impedance is $\cter{\hat{r}}_{ii}+\sqrt{-1} \cter{\hat{x}}_{ii}$ are considered. 
Assuming homogeneity of the shunt elements, i.e.  $\cter{\hat{r}}_{ii}+\sqrt{-1} \cter{\hat{x}}_{ii}= \sqrt{{\cter{\hat{r}}_{ii}^2+\cter{\hat{x}}_{ii}^2}}{\rm e}^{\sqrt{-1}\arctan\frac{\cter{\hat{x}}_{ii}}{\cter{\hat{r}}_{ii}}}=|Z_{ii}| {\rm e}^{\sqrt{-1}\arctan\phi}$, where $\phi = \arctan\frac{\cter{\hat{x}}_{ii}}{\cter{\hat{r}}_{ii}}$ for all $i$, 
{\color{black} routine derivations  (see e.g.~\cite{zhong, muenz}) show that} 
the total active and reactive power $P_i^\ell, Q_i^\ell$ ``{\color{black}exchanged}" 
by the inverter $i$
\claudio{in the lossy network is equal to
\be\label{P.Q.lossy}
\begin{bmatrix}
P_i^\ell \\
Q_i^\ell 
\end{bmatrix}
=\Phi(\phi)
\begin{bmatrix}
P_i \\
Q_i 
\end{bmatrix}
\ee
where
\[
\Phi(\phi)=
\begin{bmatrix}
\sin \phi & \cos \phi  \\
-\cos \phi & \sin \phi
\end{bmatrix},
\]
%
%
and $P_i$, $Q_i$, have the same expressions as in \eqref{active} and \eqref{reactive}. Hence, the matrix $\Phi(\phi)$ will modify the expressions of the active and reactive power exploited previously, and thus the frequency and voltage dynamics of the inverters will be changed accordingly, \claudio{disrupting the convergence of the solutions}. A natural way to counteract this modification is to exploit the inverse of 
$\Phi(\phi)$  and \claudio{use $P^\ell \sin \phi-Q^\ell \cos\phi$ and $P^\ell \cos \phi+Q^\ell \sin\phi$, with $P^\ell=\col(P^\ell_i)$ and $Q^\ell=\col(Q^\ell_i)$, in \eqref{micro.grid} instead of  $P$ and $Q$, respectively.} 
In this way, the {\em lossless} expressions of $P_i$ and $Q_i$ as in \eqref{active} and \eqref{reactive} will be recovered. Notice that, however, the implementation of these controllers requires the knowledge of the parameter $\phi$ which is assumed to be available. An interesting special case is obtained for $\phi=0$ meaning that the network is purely resistive.  In that case, in \eqref{micro.grid} $P$ should be replaced by $-Q^\ell$, and $Q$ by $P^\ell$, which is consistent with the use of droop controllers in resistive networks (see e.g. \cite[Sec. II.A]{Brabandere}).}

\claudio{As a result of the adaptation above, the same conclusions\footnote{In these conditions, whenever relevant, the negative of the susceptances $\hat B_{ii}, B_{ij}$ should be replaced by $|\cter{\hat{Z}}_{ii}|^{-1}, |Z_{ij}|^{-1}$.} as in  Theorem \ref{t:convergance} holds for the lossy network with modified inverter dynamics. Notice, however, that the actual active power $P^\ell$ will no longer be optimally shared in a lossy network with the conventional droop controller \eqref{f-droop}, quadratic droop controller \eqref{f-Qdroop}, or the reactive current controller \eqref{f-current}. Remarkably,} 
{\color{black}{in the case of the E-ARP controller, one can additionally prove that both active 
as well as reactive power sharing continues to hold. Because of its importance, the result is formalized below.
\begin{proposition}\label{t:convergance:lossy}
Suppose that Assumption \ref{assum.feasibility} with $f(V,Q,u_Q)={- [V]}K_QL_Q K_Q Q$ and condition \eqref{sconvex}, with $\overline \omega=\omega^\ast$ and $\hat B_{ii}, B_{ij}$ replaced by $|\cter{\hat{Z}}_{ii}|^{-1}, |Z_{ij}|^{-1}$, respectively, hold. 
Then the vector $(D^T \theta, \omega, V, \xi)$ with $(\theta, \omega, V, \xi)$  a solution of
\be\label{micro.grid.consensus.lossy}
\ba{rl}
\dot \theta &= \omega\\
T_P\dot \omega &= -(\omega-\omega^*) - K_P(P^\ell \sin \phi-Q^\ell \cos\phi-P^*) +u_P\\ 
\dot V &= -[V]K_QL_Q K_Q (P^\ell \cos \phi+Q^\ell \sin\phi) \\
\ea\ee
and $u_P$ given by \eqref{active.controller},  locally converges to a point in the set 
\begin{align*}
\{(D^T\theta, \omega, V, \xi) \mid \;& \omega=\omega^\ast, \xi=\overline u_P,\\ &P=\overline P, L_QK_Q Q=\boldsymbol{0}
\}.
\end{align*}
Moreover,  
${\mathds{1}^TK_Q^{-1}\; \boldsymbol{\ln}(V(t))=}\mathds{1}^TK_Q^{-1}\; \boldsymbol{\ln}(V(0))$, for all $t\ge 0$.
Finally, $P^\ell, Q^\ell$ converge to constant vectors $\overline P^\ell, \overline Q^\ell$ that
satisfy
\be\label{power.sharing.lossy}\ba{rcl}
(k_P)_i \overline P_i^\ell &=& (k_P)_j \overline P_j^\ell\\
(k_Q)_i  \overline Q_i^\ell &=& (k_Q)_j  \overline Q_j^\ell,
\ea\ee
provided that 
\be\label{same.droop}
\displaystyle\frac{(k_{P})_i}{(k_{P})_j}=\frac{(k_{Q})_i}{(k_{Q})_j},\quad \forall i,j.
\ee
\end{proposition}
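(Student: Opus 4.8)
The plan is to reduce the lossy closed loop \eqref{micro.grid.consensus.lossy} to the \emph{lossless} E-ARP system already analyzed in Theorem \ref{t:convergance}, and then transport the sharing conclusions through the rotation $\Phi(\phi)$. The starting observation is that $\Phi(\phi)$ is a rotation, so $\Phi(\phi)^{-1} = \begin{bmatrix} \sin\phi & -\cos\phi \\ \cos\phi & \sin\phi \end{bmatrix}$, whence the two combinations entering \eqref{micro.grid.consensus.lossy} satisfy $P^\ell\sin\phi - Q^\ell\cos\phi = P$ and $P^\ell\cos\phi + Q^\ell\sin\phi = Q$, where $P,Q$ are the lossless expressions \eqref{active}, \eqref{reactive} evaluated with $\hat B_{ii}, B_{ij}$ replaced by $|\hat Z_{ii}|^{-1}, |Z_{ij}|^{-1}$. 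Substituting these two identities into \eqref{micro.grid.consensus.lossy} shows that, in the $(\theta,\omega,V)$ variables, the lossy closed loop is \emph{identical} to the lossless microgrid \eqref{micro.grid} driven by the E-ARP law \eqref{f-consensus} with $u_Q=\boldsymbol{0}$ (hence $\overline u_Q=\boldsymbol{0}$), over the redefined network. First I would make this substitution explicit and verify that Assumption \ref{assum.feasibility} and condition \eqref{sconvex}, as stated in the proposition, are precisely the hypotheses of Theorem \ref{t:convergance} for this redefined lossless system.

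With the reduction in hand, the convergence claim, the description of the limit set, and the conserved quantity $\mathds{1}^T K_Q^{-1}\boldsymbol{\ln} V$ follow verbatim from Theorem \ref{t:convergance}(ii). I would invoke that theorem to obtain, on the positive limit set, $\omega=\omega^\ast$, $\xi=\overline u_P$, $P=\overline P$, and $L_Q K_Q Q = K_Q^{-1}\overline u_Q = \boldsymbol{0}$, the last equality holding because here $u_Q=\overline u_Q=\boldsymbol{0}$. The conserved-quantity identity is re-derived in one line: since $\dot V = -[V]K_Q L_Q K_Q Q$, one has $\frac{d}{dt}(\mathds{1}^T K_Q^{-1}\boldsymbol{\ln} V) = -\mathds{1}^T L_Q K_Q Q = 0$ because $\mathds{1}^T L_Q = \boldsymbol{0}$.

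It then remains to establish the lossy sharing equalities \eqref{power.sharing.lossy}. Two lossless sharing facts are available at the limit: from $L_Q K_Q \overline Q=\boldsymbol{0}$ and connectedness of the communication graph, $K_Q\overline Q \in \im \mathds{1}$, so $(k_Q)_i\overline Q_i$ is independent of $i$ (reactive sharing), while $(k_P)_i\overline P_i$ is independent of $i$ by the optimal active-power allocation of Subsection \ref{subset.power.sharing} under proportionally selected droop coefficients. I would then write, from \eqref{P.Q.lossy}, $\overline P^\ell = \sin\phi\,\overline P + \cos\phi\,\overline Q$ and $\overline Q^\ell = -\cos\phi\,\overline P + \sin\phi\,\overline Q$, multiply the $i$-th components by $(k_P)_i$ and $(k_Q)_i$ respectively, and use \eqref{same.droop} --- which states that $(k_P)_i/(k_Q)_i$ equals a constant $c$ --- to rewrite $(k_P)_i\overline Q_i = c\,(k_Q)_i\overline Q_i$ and $(k_Q)_i\overline P_i = c^{-1}(k_P)_i\overline P_i$ as quantities independent of $i$. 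Each of $(k_P)_i\overline P_i^\ell$ and $(k_Q)_i\overline Q_i^\ell$ thereby reduces to a fixed linear combination, with coefficients $\sin\phi$ and $\cos\phi$, of two $i$-independent terms, which is exactly \eqref{power.sharing.lossy}.

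The main obstacle is the first step: showing cleanly that the $\Phi(\phi)^{-1}$ correction recovers \emph{exactly} the lossless $P$ and $Q$ with susceptances replaced by $|\hat Z_{ii}|^{-1}, |Z_{ij}|^{-1}$, so that the redefined system falls literally under Theorem \ref{t:convergance}; here the homogeneity of lines \emph{and} shunts is what makes a single global $\phi$ suffice, and it must be used with care. Once this equivalence is secured, the second and third steps are routine bookkeeping, the only delicate point being that \eqref{same.droop} is precisely the condition allowing a single constant $c$ to convert $k_P$-weighting into $k_Q$-weighting and back.
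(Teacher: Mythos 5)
Your proposal is correct and follows essentially the same route as the paper: the convergence statement, limit set, and conserved quantity are obtained by observing that the $\Phi(\phi)^{-1}$-corrected powers in \eqref{micro.grid.consensus.lossy} reproduce the lossless $P$ and $Q$ (with susceptances replaced by $|\hat Z_{ii}|^{-1},|Z_{ij}|^{-1}$), so Theorem \ref{t:convergance}(ii) with $\overline u_Q=\boldsymbol{0}$ applies verbatim, and the lossy sharing identities \eqref{power.sharing.lossy} then follow by rotating the $i$-independent quantities $(k_P)_i\overline P_i$ and $(k_Q)_i\widetilde Q_i$ through $\Phi(\phi)$ using \eqref{same.droop}. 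The paper's own proof is just a terser version of the same two steps, and like you it implicitly uses the proportional selection $(k_P)_iP_i^*=(k_P)_jP_j^*$ for the active-power part.
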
}

\begin{proof}
As remarked above, the convergence of the solutions is an immediate consequence of Theorem \ref{t:convergance}. Thus, we only focus on the power sharing property.

By condition \eqref{same.droop} and relation (\ref{P.Q.lossy}) at steady state,
\[\ba{rcl}
\overline P_i^\ell &=& \overline P_i \sin \phi +\widetilde Q_i\cos \phi\\
&=& \displaystyle\frac{(k_{P})_j}{(k_{P})_i}\overline P_j\sin \phi  +\frac{(k_{Q})_j}{(k_{Q})_i}\widetilde Q_j \cos \phi\\
&=& \displaystyle\frac{(k_{P})_j}{(k_{P})_i} \overline P_j^\ell.
\ea\]

Similarly, for the reactive power
 \[\ba{rcl}
\overline Q_i^\ell & = &-\overline P_i \cos\phi +\widetilde Q_i\sin\phi \\
 &=& -\displaystyle\frac{(k_{P})_j}{(k_{P})_i}\overline P_j \cos\phi +\frac{(k_{Q})_j}{(k_{Q})_i}\widetilde Q_j\sin\phi\\
 &=& \displaystyle\frac{(k_{Q})_j}{(k_{Q})_i}(-\overline P_j \cos\phi +\widetilde Q_j\sin\phi)
\\
 &=& \displaystyle\frac{(k_{Q})_j}{(k_{Q})_i}       \overline Q_j^\ell.
\ea\]
\end{proof}
}
{\color{black}
\subsection{Dynamic extension} Another interesting feature is that thanks to the incremental passivity property the static controller $u_Q=\overline u_Q$ can be extended to a dynamic controller. \nima{By Theorem \ref{t:droop} and keeping in mind Definition \ref{d:dissipativity} together with \eqref{output} and \eqref{input}, the incremental input-output pair of the voltage dynamics appears in the time derivative of the storage function $\calS$ as
\be\label{e:io-dynamic-diss}
(\fpart{S}{V}-\barfpart{S}{V})^TT_Q^{-1}R_2(u_Q-\overline u_Q)
\ee
where $R_2$ is the lower diagonal block of $R$ in Theorem \ref{t:droop}.}
Clearly this cross term is vanished by applying the feedforward input $u_Q=\overline u_Q$. But an alternative way to compensate for this term is to introduce the dynamic controller\nima{
\be\label{uQ-dynamic}
\ba{rcl}
T_Q\dot{\lambda}&=&-R_2\fpart{\calS}{V}\\
u_Q&=&K_\lambda \lambda
\ea
\ee
for some positive definite matrix $K_\lambda$. Notice that that the controller above is decentralized for a diagonal matrix $K_\lambda$.}
Then, denoting the steady state value of $\lambda$ by $\overline \lambda$, the incremental storage function \nima{$\mathcal{C}_Q(\lambda)=\frac{1}{2}(\lambda-\overline \lambda)^TK_\lambda(\lambda-\overline \lambda)$ satisfies
\be\label{e:diss-dyn}
\frac{d}{dt}\mathcal{C}_Q=-(u_Q-\overline u_Q)^TT_Q^{-1}R_2\fpart{\calS}{V}
\ee
Note that
\[
\fpart{\calS}{V}=\fpart{S}{V}-\barfpart{S}{V}.
\]
Therefore, \eqref{e:diss-dyn} coincides with the negative of \eqref{e:io-dynamic-diss}, and thus the same convergence analysis as before can be constructed based on the storage function $\calS+\mathcal{C}+\mathcal{C}_Q$}. Consequently, the result of Theorem \ref{t:convergance} extends to the case of dynamic voltage/reactive power controller \eqref{uQ-dynamic}. 
\nima{For illustration purposes, below we provide the exact expression of the controller above in case of the conventional droop controller:
\be
\nonumber
\ba{rcl}
T_Q\dot{\lambda}&=&-[V]^{-1}K_Q^{-1}(K_Q(Q-\overline Q)+V-\overline V)\\
u_Q&=&K_\lambda\lambda
\ea
\ee
which by setting $K_\lambda=K_Q$ reduces to
\[
T_Q\dot{u}_Q=-[V]^{-1}(K_Q(Q-\overline Q)+V-\overline V)
\]
Note that here the constant vectors $\overline V$ and $\overline Q$ are interpreted as the setpoints of the dynamic controller. It is easy to see that this controller rejects any unknown constant disturbance entering the voltage dynamics \eqref{f-droop}. Other possible advantages of these dynamic controllers require further investigation, which is postponed to a future research.}

}

\section{Conclusions}\label{sec.VI}
We have presented a systematic design of incremental Lyapunov functions for the analysis and the design of network-reduced models of microgrids. Our results encompass existing ones and lift restrictive conditions, thus providing a powerful framework where microgrid control problems can be naturally cast. The method deals with the fully nonlinear model of microgrids and no linearization is carried out. 

Two major extensions can be envisioned. The first one is the investigation of similar techniques for network-preserved models of microgrids. Early results show that this is feasible and will be further expanded in a follow-up publication. 
The second one is how to use the obtained incremental passivity property to interconnect the microgrid with dynamic controllers and obtain a better understanding of voltage control. Examples of these controllers are discussed in \cite{simpson.industrial} but many others can be proposed and investigated. 

A more general question is how the set-up we have proposed can be extended to deal with {other control problems that are formulated} in the microgrid literature. Furthermore, the proposed controllers exchange information over a communication network and would be interesting to assess the impact of the communication layer on the results. In that regard, the use of Lyapunov functions is instrumental in advancing such research, since powerful Lyapunov-based techniques for the design of complex networked cyber-physical systems are already available (see e.g.~\cite{CDP:RP:CDC14}). 

\emod

\appendix
{\it Proof of Proposition \ref{p1}. }
For the sake of notational simplicity, in this proof we omit the bar from all $V, {\varphi}$. { 
\claudio{
Clearly, the Hessian \eqref{Hessian-3*3} is positive definite if and only if \eqref{Hess-1} 
holds.
%
The latter is true if and only the matrix $M$ below
\[
\begin{bmatrix}
\Gamma(V) [\boldsymbol{\cos}(D_1^T\varphi)] &[\boldsymbol{\sin}(D_1^T\varphi)]  \Gamma(V)  |D|^T  [V]^{-1} \\[2mm]
[V]^{-1} |D| \Gamma(V) [\boldsymbol{\sin}(D_1^T\varphi)] & \mathcal{A}(\boldsymbol{\cos}(D_1^T\varphi))+\displaystyle\frac{\partial^2 H}{\partial V^2}
\\[2mm]
\end{bmatrix}
\]
is positive definite. In fact recall that the matrix in \eqref{Hess-1} can be written as the product 
\[
\begin{bmatrix}
D_1 & \boldsymbol{0}\\
\boldsymbol{0} & I
\end{bmatrix}
M
\begin{bmatrix}
D_1^T & \boldsymbol{0}\\
\boldsymbol{0} & I
\end{bmatrix},
\]
and our claim descends from $D_1 D_1^T$ being nonsingular, the latter holding for $D_1 D_1^T$ is the principal submatrix of the Laplacian of a connected graph.
}
Furthermore, note that by assumption  $\Gamma(V) [\boldsymbol{\cos}(D_1^T\varphi)]$ is nonsingular. 
Then the  Hessian  is positive definite, or equivalently (\ref{Hess-1}) holds,  if and only if $\Gamma(V) [\boldsymbol{\cos}(D_1^T\varphi)]$ and 
\[\ba{c}
\Psi(D_1^T\varphi, V) := \mathcal{A}(\cos(D_1^T\varphi))+[h(V)] -[V]^{-1} |D| \Gamma(V)  \\  
~[\boldsymbol{\sin}(D_1^T\varphi)]^2 [\boldsymbol{\cos}(D_1^T\varphi)]^{-1}  |D|^T  [V]^{-1}> 0. 
\ea\]
Introduce the diagonal weight matrix, { where $\eta=D_1^T\varphi$,}  
\[
W(V, \eta):= \Gamma(V) [\boldsymbol{\sin}(\eta)]^2  [\boldsymbol{\cos}(\eta)]^{-1}.
\]
For each $k\sim \{i,j\}\in E$, its $k$th diagonal element is
\[
W_k(V_i, V_j , \eta_{k}):= B_{ij} V_i V_j \frac{\sin^2 (\eta_{k})}{\cos(\eta_{k})}.
\]
Furthermore, it can be verified that 
\[\ba{l}
\left[|D| \Gamma(V) [\boldsymbol{\sin}(\eta)]^2  [\boldsymbol{\cos}(\eta)\right]^{-1}  |D|^T]_{ij}
\\[3mm]
\qquad =\left\{
\ba{lll}
\displaystyle\sum_{k\sim \{i,\ell\}\in E} B_{i \ell} V_i V_\ell \frac{\sin^2 (\eta_{k})}{\cos(\eta_{k})} & \textrm{if} & i=j\\
B_{ij} \displaystyle V_i V_j \frac{\sin^2 (\eta_{k})}{\cos(\eta_{k})} & \textrm{if} & i\ne j,
\ea
\right.
\ea\]
from which 
\[\ba{l}
\left[[V]^{-1}  |D| \Gamma(V) [\boldsymbol{\sin}(\eta)]^2  [\boldsymbol{\cos}(\eta)\right]^{-1}  |D|^T [V]^{-1} ]_{ij} 
\\[3mm]
\qquad =\left\{
\ba{lll}
\displaystyle\sum_{k\sim \{i,\ell\}\in E} B_{i \ell} \frac{V_\ell}{V_i} \frac{\sin^2 (\eta_{k})}{\cos(\eta_{k})} & \textrm{if} & i=j\\
{\color{black}
\displaystyle B_{ij} \frac{\sin^2 (\eta_{k})}{\cos(\eta_{k})} 
}
& \textrm{if} & i\ne j.
\ea
\right.
\ea\]
On the other hand,
\[\ba{l}
\left[\mathcal{A}(\cos(\eta))+[h( V)]\right]_{ij}
\\[3mm]
\qquad=
\left\{
\ba{lll}
\hat B_{ii} + \displaystyle\sum_{k\sim \{i,\ell\}\in E} B_{i \ell} + h_i( V_i),
& \textrm{if} & i=j\\
- B_{i j}  \cos(\eta_{m}),
 & \textrm{if} & i\ne j
\ea
\right.
\ea\]
\cter{with} $m\sim\{i,j\}\in E$, \cter{$i\ne j$}.  
Suppose that each diagonal entry of matrix $\Psi(\eta, V)$ is positive, that is for each $i=1,2,\ldots, n$,
\[
\ba{l}
m_{ii}
=\hat B_{ii} + \displaystyle\sum_{\ell=1, \ell\ne i}^n B_{i \ell} + h_i(V_i)- \displaystyle\sum_{k\sim \{i,\ell\}\in E} B_{i \ell} \frac{V_\ell}{V_i} \frac{\sin^2 (\eta_{k})}{\cos(\eta_{k})}\\
=\hat B_{ii} + \displaystyle\sum_{k\sim \{i,\ell\}\in E} B_{i \ell} 
\left(
1-\frac{V_\ell}{V_i} \frac{\sin^2 (\eta_{k})}{\cos(\eta_{k})} 
\right)
+ h_i(V_i) 
>0. 
\ea\]
Notice that this holds true because of condition (\ref{cond2}).
Assume also that, for each $i=1,2,\ldots, n$,  
\[\ba{rcl}
m_{ii} &>&  \displaystyle\sum_{k\sim \{i,\ell\}\in E} B_{i \ell}  \left|   \cos(\eta_{k})  
{\color{black} + \frac{\sin^2 ({\color{black}\eta}_{k})}{\cos({\color{black}\eta}_{k})}
}
\right|\\
&=& 
\displaystyle
\sum_{
{k\sim\{i, \ell\}\in E}} B_{i \ell} \;
{
\claudio{ \sec(\overline \eta_k)}
}
, 
\ea\]
which is condition (\ref{cond2}). 
Then by Gershgorin theorem all the eigenvalues of the matrix $\Psi(\eta, V)$ have strictly positive real parts and the Hessian is positive definite.

\bigskip

\bigskip{}
\new{{\it Proof of Lemma \ref{inertia}. }
An incremental model of the dynamical system \eqref{micro.grid} with respect to a synchronous solution can be written as follows 
\be\label{micro.grid.incremental}
\ba{rcl}
\displaystyle \frac{d}{dt} (\theta-\overline \theta) &=& (\omega-\overline \omega)\\[2mm]
T_P\displaystyle \frac{d}{dt} (\omega-\overline \omega) &=& -(\omega-\overline \omega) - K_P(P-\overline P) +(u_P-\overline u_P)\\ [2mm]
T_Q\displaystyle \frac{d}{dt} (V-\overline V) &=& f(V,Q, u_Q)-f(\overline{V},\overline{Q}, \overline{u}_Q).
\ea
\ee
Recalling the equalities 
\eqref{dV--droop}, 
\eqref{dV--Qdroop}, \eqref{dV--current} 
\cter{and \eqref{shift-co}, the system in the $\varphi$ variables rewrites as}
\be\label{micro.grid-eta}
\ba{rcl}
\dot \varphi &=&  {\color{black}E^T}  T_P^{-1}K_P\displaystyle \frac{\partial \mathcal{S}}{\partial \omega} \\[2mm]
\dot \omega &=& -T_P^{-2}\cter{K_P}\displaystyle \frac{\partial \mathcal{S}}{\partial \omega} - T_P^{-1}K_P {\color{black}E} \displaystyle \frac{\partial \mathcal{S}}{\partial \varphi} +T_P^{-1}(u_P-\overline u_P)\\ 
\dot V &=& -T_Q^{-1}X(V)\displaystyle \frac{\partial \mathcal{S}}{\partial V}+Y(V) (u_Q -\overline u_Q)
\ea\ee
{\color{black}
where 
\[
E=\begin{bmatrix}
I_{n-1}\\
-\mathds{1}_{n-1}^T
\end{bmatrix}
\]
and
}
$X$ and $Y$ depend on the voltage dynamics, and are equal to the lower diagonal block of $W$ and $R$ in Theorem \ref{t:droop}, respectively. 
The Jacobian of the right-hand side at steady-state with $u_P=\overline u_P$, $u_Q=\overline u_Q$ is 
\[
\begin{small}
\begin{bmatrix}
0 & {\color{black}E}^T T_P^{-1}K_P\displaystyle \frac{\partial^2 \mathcal{S}}{\partial \omega^2}
& 0\\[4mm]
- T_P^{-1}K_P {\color{black}E}\displaystyle \frac{\partial^2 \mathcal{S}}{\partial \varphi^2} & -T_P^{-2}\cter{K_P}\displaystyle \frac{\partial^2 \mathcal{S}}{\partial \omega^2} & - T_P^{-1}K_P D\displaystyle \frac{\partial^2 \mathcal{S}}{\partial V \partial \varphi}\\[4mm]
-T_Q^{-1}X(V)\displaystyle \frac{\partial^2 \mathcal{S}}{\partial \varphi \partial V} & 0 & 
-T_Q^{-1}X(V)\displaystyle \frac{\partial^2 \mathcal{S}}{\partial V^2}
\end{bmatrix}_-
\end{small} 
\]
where we have used the fact that $ \frac{\partial \mathcal{S}}{\partial V}$ vanishes at steady state.
The matrix above, denoted by $\mathcal{F}$, can be decomposed in the port-Hamiltonian form as
\[
\mathcal{F}=(\mathcal{J}-\mathcal{R}{\color{black}(V))}\left.\frac{\partial^2 S}{\partial (\varphi, \omega, V)^2}\right|_-
\]
where
\be\label{calJ}
\mathcal{J}= \begin{bmatrix}
0
& {\color{black}E}^T T_P^{-1}K_P&0\\
- K_PT_P^{-1}{\color{black}E} & 0 & 0\\
0 & 0 & 0
\end{bmatrix}
\ee
and
\be\label{calR}
\mathcal{R}(V) = \begin{bmatrix}
0
& 0
& 0\\
0 & -T_P^{-2}\cter{K_P} & 0\\
0 & 0 & 
-T_Q^{-1}X(V)
\end{bmatrix}.
\ee
It is not difficult to observe that the matrix $\mathcal{J}-\mathcal{R}{\color{black}(V)}$ is nonsingular 
and then by leveraging inertia theorems for matrices, \cite{carlson-inertia}, \cite{ostrowski-inertia}, it follows that the matrix $\mathcal{F}$ possesses an eigenvalue in the open right half plane, and thus the equilibrium $(\overline \varphi, \overline \omega, \overline V)$ is unstable.
%
}

\bigskip

{\it Proof of Lemma \ref{lemma.cut}.} 
As before, we omit the bar from the variables. \cter{First observe that if  $\overline \eta=D_1^T \overline \varphi \not \in (-\frac{\pi}{2}, \frac{\pi}{2})^m$ then the top-left block of $M$, namely $\Gamma(V) [\boldsymbol{\cos}(D_1^T \varphi)]$, has a negative eigenvalue, and this implies the existence of a vector $v\ne 0$ such that $v^TM v<0$, thus showing that $M$ has  a negative eigenvalue. Thus, in the remaining of the proof we let  $\overline \eta\in (-\frac{\pi}{2}, \frac{\pi}{2})^m$. } 
We write the quadratic form as
\[\ba{rcl}
v^T M v &=& (v^{(1)})^T \Gamma(V)[\boldsymbol{\cos}(\cter{\eta})] v^{(1)} + \\
&& 2 (v^{(2)})^T [V]^{-1}|D|\Gamma(V)[\boldsymbol{\sin}(\cter{\eta})]v^{(1)}+\\
&& (v^{(2)})^T (\mathcal{A}(\boldsymbol{\cos}(\eta))+\displaystyle\frac{\partial^2 H}{\partial V^2}) v^{(2)},
\ea\]
\cter{where again $\eta= D_1^T \varphi$.}
Expanding the first term on the right-hand side above, it is obtained
\[
\ba{rcl}
&&(v^{(1)})^T \Gamma(V)[\boldsymbol{\cos}(\cter{\eta})] v^{(1)} \\
&=& \displaystyle\sum_{k\in E} v_k^2 \gamma_k(V)\cos(\eta_k),
\ea
\]
having used 
$\gamma_k(V)=V_i V_j B_{ij}$, \claudio{$k\sim\{i,j\}$}. \\
The second term can be written as 
\[\ba{rcl}
&&2 (v^{(2)})^T [V]^{-1}|D|\Gamma(V)[\boldsymbol{\sin}(\cter{\eta})]v^{(1)}\\
&=& 2\displaystyle \sum_{\cter{i\in \mathcal{I}}} v_{{\color{black}m}+i}V_i^{-1} \sum_{k\in E} |d_{ik}|v_k \gamma_k(V)\sin(\eta_k) \\
&=&  2 \displaystyle\sum_{k\in E, k\sim \{i,j\}} B_{ij} \sin(\eta_k) (v_{{\color{black}m}+i}V_j+ v_{{\color{black}m}+j} V_i) v_k,
\ea\]
whereas the third term becomes
\[\ba{l}
(v^{(2)})^T (\mathcal{A}(\boldsymbol{\cos}(\cter{\eta}))+\displaystyle\frac{\partial^2 H}{\partial V^2}) v^{(2)}=\\ 
\displaystyle\sum_{i\in \cter{\mathcal{I}}} (
(B_{ii}+h_i( V_i)) v_{{\color{black}m}+i}^2 
-
\hspace{-0.75cm}\sum_{j\in \cter{\mathcal{I}}, j\ne i, k\sim\{i,j\}} 
\hspace{-0.75cm}
B_{ij} \cos(\eta_k)  v_{{\color{black}m}+i} v_{{\color{black}m}+j})= \\
\displaystyle\sum_{i\in \cter{\mathcal{I}}} 
(B_{ii}+h_i( V_i)) v_{{\color{black}m}+i}^2 
-2\hspace{-0.5cm}\sum_{k\in E, k\sim\{i,j\}} 
\hspace{-0.5cm}
B_{ij} \cos(\eta_k)  v_{{\color{black}m}+i} v_{{\color{black}m}+j}.
\ea\]
Overall we have 
\[\ba{l}
v^T M v = \hspace{-0.5cm}
\displaystyle \sum_{k\in E, k\sim\{i,j\}} 
\hspace{-0.5cm}
B_{ij}  (v_k^2 V_i V_j \cos(\eta_k)+2\sin(\eta_k) (v_{{\color{black}m}+i}V_j +\\
v_{{\color{black}m}+j} V_i) v_k-2 \cos(\eta_k)  v_{{\color{black}m}+i} v_{{\color{black}m}+j})+\displaystyle\sum_{i\in \cter{\mathcal{I}}} 
(B_{ii}+h_i(V_i)) v_{{\color{black}m}+i}^2. 
\ea\]
\cter{To prove the existence of a negative eigenvalue of the matrix $M$, we look for a vector $v$ 
such that $v^T Mv<0$. The candidate $v={\rm col}(v^{(1)}, v^{(2)})$ is a vector whose first subvector $v^{(1)}\in \mathbb{R}^m$ is associated to the cut-set, i.e. $v^{(1)}_k=\pm 1$ if $k\in K$ and $0$ otherwise, and whose second subvector $v^{(2)}\in \mathbb{R}^n$ satisfies $v^{(2)}_i\ne 0$ if and only if  $i\in \mathcal{I}$ and node $i$ is adjacent to an edge $k$ in the cut-set $K$, i.e.~$k\sim\{i,j\}\in K$. Bearing in mind the quadratic form derived above, the inequality $v^T Mv<0$ writes as}
%
%
\[\ba{c}
v^T M v =\displaystyle\sum_{k\in \cter{K}, k\sim\{i,j\}} B_{ij}  (v_k^2 V_i V_j \cos(\eta_k)+2(\sin\eta_k) \\[2mm]
\cdot (v_{{\color{black}m}+i}V_j+ v_{{\color{black}m}+j} V_i) v_k-2 \cos(\eta_k)  v_{{\color{black}m}+i} v_{{\color{black}m}+j})+\\[2mm]
 \displaystyle\sum_{i\in \cter{\mathcal{I}}: k\sim \{i,j\}, k\in \cter{K}} 
(B_{ii}+h_i( V_i)) v_{{\color{black}m}+i}^2 <0. 
\ea\]
For any $k\sim\{i,j\}\in K$, 
if one chooses
\[
v_{\cter{m}+i}= V_i v_k \overline{v}_{{\color{black}m}+i},\quad   v_{{\color{black}m}+j}= V_j v_k\overline v_{{\color{black}m}+j}
\]
with $\overline v_{{\color{black}m}+i}, \overline v_{{\color{black}m}+j}$ to be designed later, 
the previous inequality  is equivalent to
\[\ba{c}
\displaystyle\sum_{k\in K, k\sim \{i,j\}}  B_{ij} V_i V_j (\cos(\eta_k)- \\[2mm]
2 \cos(\eta_k) \overline v_{{\color{black}m}+i} \overline v_{{\color{black}m}+j}
+ 
2  \sin(\eta_k) (\overline v_{{\color{black}m}+i}+ \overline v_{{\color{black}m}+j} ) ) +\\[2mm]
\displaystyle\sum_{i\in \cter{\mathcal{I}}: k\sim \{i,j\}, k\in K} 
(B_{ii}+h_i( V_i)) V_i^2 \overline v_{{\color{black}m}+i}^2 <0.
\ea\]
Notice now that for any pair $i,j$ such that $k\sim \{i,j\}$ with $k\in K$, 
\[
(B_{ii}+h_i( V_i)) V_i^2 +(B_{jj}+h_j( V_j)) V_j^2 
\le 
\beta_k(V_i, V_j) B_{ij} V_i V_j, 
\]
which returns, by setting  $\overline v_{{\color{black}m}+i}= \overline v_{{\color{black}m}+j}=\overline v_k$, 
%
%
\[\ba{c}
\hspace{-0.15cm} \displaystyle \sum_{k\sim \{i,j\}\in K} \hspace{-0.5cm} B_{ij} V_i V_j (\cos(\eta_k)
+  
\displaystyle 2 \sin(\eta_k) \overline v_k +  \beta_k(V_i, V_j)  \overline v_k^2)<0,
\ea\]
\cter{where we have exploited the fact that $-2 \cos(\eta_k) \overline v_{{\color{black}m}+i} \overline v_{{\color{black}m}+j}=- 2 \cos(\eta_k) \overline v_k^2<0$ for all $k$ since $\eta_k \in (-\frac{\pi}{2}, \frac{\pi}{2})$. }
Hence, there exist $\overline v_k${'s} such that the inequality is satisfied if \claudio{ the discriminants are} positive, i.e.
$\sin(\eta_k)^2  - \beta_k(V_i, V_j)  \cos(\eta_k)>0$ \claudio{for all $k\in K$}. 
This ends the proof.

\medskip

\cter{{\it Proof of Proposition \ref{prop.instability}.}  Under the given assumptions, by Lemma \ref{lemma.cut} there exists a vector $v=(v^{(1)}, v^{(2)})\ne 0$ such that $v^T Mv <0$, where $v^{(1)}$ is the vector associated to the cut-set. Hence, it belongs to the cut-space, namely the column space of $D^T$ or equivalently  to the one of $D_1^T$. As a result, bearing in mind \eqref{Hessian-3*3}, \eqref{Hess-1}, the inequality $v^T Mv <0$ implies $w^T \left.\frac{\partial^2 S}{\partial (\varphi, \omega, V)^2}\right|_- w <0$ for some $w\ne 0$. In view of the symmetry of the Hessian, this in turn implies that the Hessian has a negative eigenvalue, thus proving the instability of $(\overline \varphi, \overline \omega, \overline V)$ by Lemma \ref{inertia}.} 


\end{document}